\documentclass[10pt]{article}
\usepackage{amssymb,amsmath}
\usepackage{graphicx}
\parskip 1ex
\oddsidemargin -0.2in   
\evensidemargin -0.2in  
\topmargin -0.2in       
\textwidth=170mm \textheight=220mm
\author{Abdallah Assi\thanks{Universit\'e d'Angers, Math\'ematiques,
49045 Angers cedex 01, France, e-mail:assi@univ-angers.fr}}
\title{Irreducibility criterion for  quasi-ordinary polynomials
\footnote{2000 Mathematical Subject Classification: 32S25,
32S70.\newline During the development of this work, the author
visited the Department of Mathematics at the American University
of Beirut, Lebanon. He would like to thank that institution for
hospitality and support. He also would like to think the Center
for Advanced Mathematical Sciences-CAMS for offering access to
many facilities.}}
\date{\mbox{}}

\newtheorem{teorema}{Theorem}[section]



\newtheorem{proposition}[teorema]{Proposition}
\newtheorem{lemma}[teorema]{Lemma}
\newtheorem{definition}[teorema]{Definition}

\newtheorem{theorem}[teorema]{Theorem}

\newenvironment{proof}[1]{\paragraph{\sl Proof#1}}{}
\newenvironment{proofs}[1]{\paragraph{\sl Proof of Theorem 6.1.#1}}{}




\newcommand{\RR}{{\bf R}}
\newcommand{\KK}{{\bf K}}

\newcommand{\NN}{{\bf N}}

\begin{document}

\maketitle

\section*{Introduction}

\noindent Let ${\bf K}$ be an algebraically closed field of
characteristic zero, and let ${\bf R}={\bf
K}[[x_1,\ldots,x_e]]={\bf K}[[\underline{x}]]$ be the ring of
formal power series in $x_1,\ldots,x_e$ over ${\KK}$. Let
$f=y^n+a_1(\underline{x})y^{n-1}+\ldots+a_n({\underline{x}})$ be
 a nonzero  polynomial of ${\bf R}[y]$, and suppose
 that $f$ is irreducible in ${\bf R}[y]$. Suppose that $e=1$ and let $g$ be a nonzero
 polynomial of ${\bf R}[y]$, then define the intersection
 multiplicity of $f$ with $g$, denoted int$(f,g)$, to be the $x$-order of the $y$ resultant
 of $f$ and $g$. The set of int$(f,g), g\in {\bf R}[y]$, defines a
 semigroup, denoted $\Gamma(f)$. It is will known that a set of
 generators of $\Gamma(f)$ can be computed from polynomials having
 the maximal contact with $f$ (see [1] and [6]), namely, there exist
 $g_1,\ldots,g_h$ such that $n, {\rm int}(f,g_1),\ldots,{\rm int}(f,g_h)$ generate
 $\Gamma(f)$ and for all $1\leq k\leq h$, the Newton-Puiseux expansion of $g_k$ coincides with that of $f$
 until a characteristic exponent of $f$. In [1], Abhyankar
 introduced a special set of polynomials called the approximate
 roots of $f$. These polynomials have the advantage that they can
 be calculated from the equation of $f$ by using the Tschirnhausen
 transform.  Suppose that $e\geq 2$ and that the discriminant of $f$ is of the form $x_1^{N_1}.\ldots.x_e^{N_e}.u(x_1,\ldots,x_e)$, where
 $u$ is a unit in ${\bf K}[[\underline{x}]]$
 (such a polynomial is called quasi-ordinary polynomial).
 By Abhyankar-Jung Theorem, the roots of $f(x_1,\ldots,x_e,y)=0$ are all in
 ${\bf K}[[x_1^{1\over n},\ldots,x_e^{1\over n}]]$, in particular there exists a power series
 $y(t_1,\ldots,t_e)=\sum_pc_pt_1^{p_1}.\ldots.t_e^{p_e}\in {\bf K}[[t_1,\ldots,t_e]]$ such
 that $f(t_1^n,\ldots,t_e^n,y(t_1,\ldots,t_e))=0$ and the other roots of $f(t_1^n,\ldots,t_e^n,y)=0$ are
 the conjugates of $y(t_1,\ldots,t_e)$ with respect to the $n$th roots of unity in ${\bf K}$. Given a polynomial $g$ of ${\bf R}[y]$, we
 define the order of $g$ to be the leading exponent with respect to the lexicographical order
  of the smallest homogeneous component of
$g(t_1^n,\ldots,t_e^n,y(t_1,\ldots,t_e))$. The set of orders of
polynomials of ${\bf R}[y]$  defines  a semigroup. In this paper
we first prove that the canonical basis of $(n{\bf Z})^e$ with the
set of orders of  the approximate roots of $f$ generate the
semigroup of $f$, then we give, using these approximate roots and
the notion of generalized Newton polygons, a criterion for a
quasi-ordinary polynomial to be irreducible.  Note that if $e=1$,
then $f$ is quasi-ordinary, in particular our results generalize
those of Abhyankar (see [1] and [3]).

 \noindent The paper is organized as follows: in Section 1 we
 introduce the notion of approximate roots of a polynomial in one
variable  over a commutative ring with unity. In Section 2 we show
how to
 associate a semigroup with an irreducible quasi-ordinary
 polynomial of  ${\bf R}[y]$. In Section 3 we introduce the notion of pseudo roots of
 a quasi-ordinary polynomial $f$ then we prove that the orders of these polynomials together with the canonical basis of
 $(n{\bf Z})^e$ give a set of generators of the semigroup of $f$. This result remains true if we
 replace the pseudo roots of $f$ by its set of approximate roots. This is what we prove in Section 4.
 Sections 5 and 6 are devoted to the irreducibility criterion: in Section 5 we introduce the notion of generalized
 Newton polygon, and we define the notion of straightness of a polynomial with respect to a set
 of polynomials, then we use these notions in section 6
in order to decide if a given quasi-ordinary polynomial is
irreducible. We end the paper with some examples in section 7.

\section {G-adic expansions}

\noindent Let $\RR$ be a commutative ring with unity and let
$\RR[y]$ be the ring of polynomials in $y$ with coefficients in
$\RR$. Let $f=y^n+a_1y^{n-1}+\ldots+a_n$ be a monic polynomial of
$\RR[y]$ of  degree $n > 0$ in $y$. Let $d\in{\NN}$ and suppose
that $d$ divides $n$. Let $g$ be a monic polynomial in $\RR[y]$ of
degree $\displaystyle{n\over d}$ in $y$. There exist unique
polynomials $a_1(y),\ldots,a_d(y)\in\RR[y]$ such that:

$$
 f=g^d+\sum_{i=1}^{d}a_i(y).g^{d-i}
$$

\noindent and for all $1\leq i\leq d$, if we denote by deg$_y$ the
$y$-degree, then deg$_y(a_i)< \displaystyle{n\over d}={\rm
deg}_yg$. The equation above  is called the $g$-adic expansion of
$f$.

\noindent This construction can be generalized to a sequence of
polynomials. Let to this end $n=d_1 > d_2 >...>d_h$ be a sequence
of integers such that $d_{i+1}$ divides $d_i$ for all $1\leq i\leq
h-1$, and set $e_i=\displaystyle{d_i\over d_{i+1}}$, $1\leq i\leq
h-1$ and $e_h=+\infty$. For all $1\leq i\leq h$, let $g_i$ be a monic polynomial of
$\RR[y]$ of degree $\displaystyle{n\over d_i}$ in $y$. Set
$G=(g_1,\ldots,g_h)$ and let $B=\lbrace
(\theta_1,\ldots,\theta_h)\in {\bf
  N}^{h}, 0\leq \theta_i < e_i$ for all $1\leq i\leq h\rbrace$. Then $f$
  can be
 uniquely written in the following form:

$$
 f=\sum_{\underline{\theta}\in
B}a_{\underline{\theta}}.g^{\underline{\theta}}
$$

\noindent where if
$\underline{\theta}=(\theta_1,\ldots,\theta_h)$, then
$g^{\underline{\theta}}=g_1^{\theta_1}.\ldots.g_h^{\theta_h}$ and
$a_{\underline{\theta}}\in\RR$. We call this expansion the
$G$-adic expansion of $f$. We set Supp$_G(f)=\lbrace
\underline{\theta}; a_{\underline{\theta}}\not=0\rbrace$ and we
call it the $G$-support of $f$.

\noindent  Let $f,g$ be as above and let
$f=g^d+\sum_{i=1}^{d}a_i.g^{d-i}$ be the $g$-adic expansion of
$f$. Assume that $d$ is a unit in ${\bf R}$. The Tschirnhausen
transform of $f$ with respect to $g$, denoted $\tau_f(g)$, is
defined by $\tau_f(g)=g+d^{-1}a_1$. Note that $\tau_f(g)=g$ if and
only if $a_1=0$. By [1],  $\tau_f(g)=g$ if and only if
deg$_y(f-g^d) <n-\dfrac{n}{d}$. If one of these equivalent
conditions is verified, then the polynomial $g$ is called a $d$-th
approximate root of $f$. By [1], there exists a unique
 $d$-th approximate root of $f$. We denote it by App$_y^d(f)$.

\section{The semigroup of a quasi-ordinary polynomial}

\medskip

\noindent Let ${\bf K}$ be an algebraically closed field of
characteristic zero, and let ${\bf R}={\bf K}[[x_1,\ldots,x_e]]$
(denoted ${\bf K}[[\underline{x}]]$) be the ring of formal power
series in $x_1,\ldots,x_e$ over ${\KK}$. Let
$f=y^n+a_1(\underline{x})y^{n-1}+\ldots+a_n(\underline{x})$ be
 a nonzero  polynomial of ${\bf R}[y]$. Suppose that the discriminant of $f$ is
 of the form $x_1^{N_1}.\ldots.x_e^{N_e}.u(x_1,\ldots,x_e)$, where
 $N_1,\ldots,N_e\in{\bf N}$ and $u(\underline{x})$ is a unit
 in ${\bf K}[[\underline{x}]]$. We call $f$  a
 quasi-ordinary polynomial. It follows from Abhyankar-Jung Theorem that there
 exists a  power series
 $y(\underline{t})=y(t_1,\ldots,t_e)\in {\bf K}[[t_1,\ldots,t_e]]$ (denoted ${\bf K}[[\underline{t}]]$)
 such that
$f(t_1^n,\ldots,t_e^n,y(\underline{t}))=0$. Furthermore, if $f$ is
an
 irreducible polynomial, then we have:

$$
f(t_1^n,\ldots,t_e^n,y)=\prod_{i=1}^n(y-y(w_1^it_1,\ldots,w_e^it_e))
$$

\noindent where  $(w_1^i,\ldots,w_e^i)_{1\leq i \leq n}$ are
distinct elements of $(U_n)^e$, $U_n$ being  the group of $n$th
roots of unity in ${\bf K}$.

\noindent  Suppose that $f$ is irreducible and let
$y(\underline{t})$ be as above. Write $y(\underline
t)=\sum_{p}c_p{\underline t}^p$ and define the support of $y$ to
be the set $\lbrace p|c_p\not= 0\rbrace$. Obviously the support of
$y(w_1t_1,\ldots,w_et_e)$ does not depend on $w_1,\ldots,w_e\in
U_n$. We denote it by Supp$(f)$ and we call it the support of $f$.
It is well known that there exists a finite sequence of elements
in Supp$(f)$, denoted $m_1,\ldots,m_h$, such that

i) $m_1 < m_2 <\ldots <m_h$, where $<$ means $<$ coordinate-wise.

ii) If $c_p\not= 0$, then $p\in (n{\bf Z})^e+\sum_{|m_i|\leq
|p|}m_i{\bf Z}$.

iii) $m_i\notin  (n{\bf Z})^e+\sum_{j<i}m_j{\bf Z}$ for all
$i=1,\ldots,h$.

\noindent The set of elements of this sequence is called the set
of characteristic exponents of $f$. We denote by convention
$m_{h+1}=(+\infty,\ldots,+\infty)$. If $e=1$, this set is nothing
but the set of Newton-Puiseux exponents of $f$.

\medskip

\noindent Let $u=\sum_{p}c_p{\underline t}^p$ in ${\bf
K}[[\underline t]]$ be a nonzero power series. We denote by
In$(u)$ the initial form of $u$: if $u=u_d+u_{d+1}+\ldots$ denotes
the decomposition of $u$ into  sum of homogeneous components, then
In$(u)=u_d$. We set $O_t(u)=d$ and we call it the
$\underline{t}$-order of $u$. We denote by exp$(u)$ the greatest
exponent of In$(u)$ with respect to the lexicographical order. We
denote by inco$(u)$ the coefficient $c_{\rm exp}(u)$, and we call
it the initial coefficient of $u$. We set M$(u)={\rm
inco}(u){\underline{t}}^{{\rm exp}(u)}$, and we call it the
initial monomial of $u$.

\noindent Let $g$ be a nonzero quasi-ordinary element of ${\bf
R}[y]$. The order of $g$ with respect to $f$, denoted $O(f,g)$, is
defined to be  exp($g(t_1^n,\ldots,t_e^n,y(\underline{t})$). Note
that it does not depend on the choice of the root
$y(\underline{t})$ of $f(t_1^n,\ldots,t_e^n,y)=0$. The set
$\lbrace O(f,g)|g\in {\bf R}\rbrace$ defines a subsemigroup of
${\bf Z}^e$. We call it the semigroup associated with $f$ and we
denote it by $\Gamma(f)$.

\medskip

\noindent Let $M(e,e)$ be the unit $(e,e)$ matrix. Let $D_1=n^e$
and for all $1\leq i\leq h$, let $D_{i+1}$ be the gcd of the
$(e,e)$ minors of the matrix $(nM(e,e),{m_1}^T,\ldots,{m_i}^T)$
(where $T$ denotes the transpose of a matrix). Since $m_i\notin
(n{\bf Z})^e+\sum_{j < i}m_j{\bf Z}$ for all $1\leq i\leq h $,
then $D_{i+1}< D_{i}$. We define the sequence $(e_i)_{1\leq i \leq
h}$ to be $\displaystyle{e_i={D_i\over D_{i+1}}}$ for all $1\leq
i\leq h$.

\noindent Let $M_0=({n\bf Z})^e$ and let $M_i= (n{\bf
Z})^e+\sum_{j=1}^im_j{\bf Z}$ for all $1\leq i\leq h$. Then $e_i$
is the index of the lattice $M_{i-1}$ in $M_i$, and
$n=e_1.\ldots.e_h$, in particular $D_{h+1}=n^{e-1}$. We set
$d_i=\displaystyle{{D_i}\over {D_{h+1}}}$ for all $1\leq i\leq
h+1$. In particular $d_1=n$ and $d_{h+1}=1$. The sequence
$(d_1,d_2,\ldots,d_{h+1})$ is called the gcd-sequence associated
with $f$. We also define the sequence $(r_k)_{1\leq k \leq  h}$ by
$r_1=m_1$ and $r_{k+1}=e_kr_k+m_{k+1}-m_k$ for all $1\leq k \leq
h-1$.


\medskip

\noindent Denote by Root$(f)$ the set of $n$ roots of
$f(t_1^n,\ldots,t_e^n,y)=0$ introduced  above and let
$y(\underline{t})$ be an element of this set. We have the
following:

{\begin{lemma}{\rm i) In$(y(\underline{t})-z(\underline{t}))$ is a
monomial for all $z(\underline{t})\in {\rm Root}(f)-\lbrace
y(\underline{t})\rbrace$. Furthermore, $\lbrace {\rm
exp}(y(\underline{t})-z(\underline{t}))|z(\underline{t})\in {\rm
Root}(f)-\lbrace y(\underline{t})\rbrace \rbrace = \lbrace
m_1,\ldots,m_h\rbrace$.

ii) Let for all $1\leq k\leq h$,

 $$S(k)=\lbrace z(\underline{t})\in {\rm Root}(f)| {\rm exp}(y(\underline{t})-z(\underline{t})) = m_k \rbrace. $$

 $$R(k)=\lbrace  z(\underline{t})\in {\rm Root}(f)| {\rm exp}(y(\underline{t})-z(\underline{t}))\geq m_k \rbrace.$$

$$Q(k)= \lbrace z(\underline{t})\in {\rm Root}(f)| {\rm exp}(y(\underline{t})-z(\underline{t}))< m_k \rbrace.$$

\noindent Then the cardinality of $S(k)$ (resp. $R(k)$, resp.
$Q(k)$) is $d_k-d_{k+1}$. (resp. $d_k$, resp. $n-d_k$).
}\end{lemma}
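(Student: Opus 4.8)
The plan is to reduce the lemma to two facts: that the quasi-ordinary hypothesis makes each difference $y(\underline{t})-z(\underline{t})$ a monomial times a unit of $\KK[[\underline{t}]]$, and that the arithmetic of properties (i)--(iii) of the characteristic exponents turns the rest into a count of characters. Throughout, a root $z(\underline{t})\in{\rm Root}(f)$ is $y(w_1t_1,\ldots,w_et_e)$ for some $\underline{w}=(w_1,\ldots,w_e)\in(U_n)^e$, and writing $\chi_{\underline{w}}(p)=w_1^{p_1}\cdots w_e^{p_e}$ we have
$$y(\underline{t})-z(\underline{t})=\sum_p c_p\bigl(1-\chi_{\underline{w}}(p)\bigr)\underline{t}^{p},$$
so the support of $y-z$ is $\{p\in{\rm Supp}(f)\mid\chi_{\underline{w}}(p)\neq1\}$. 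Since $\chi_{\underline{w}}$ kills $(n\ZZ)^e$ and $\KK$ contains all roots of unity, $\underline{w}\mapsto\chi_{\underline{w}}$ identifies $(U_n)^e$ with the character group of $\ZZ^e/(n\ZZ)^e$; moreover, as ${\rm Supp}(f)\subseteq M_h$ and $M_h$ is generated by $(n\ZZ)^e$ together with $m_1,\ldots,m_h\in{\rm Supp}(f)$, two choices of $\underline{w}$ yield the same root iff $\chi_{\underline{w}}$ and $\chi_{\underline{w}'}$ agree on $M_h$. Hence every fibre of $\underline{w}\mapsto y(w_1t_1,\ldots,w_et_e)$ has cardinality $n^e/[M_h:(n\ZZ)^e]=n^e/(e_1\cdots e_h)=n^{e-1}$, and $z(\underline{t})=y(\underline{t})$ iff $\chi_{\underline{w}}$ is trivial on $M_h$.

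For part (i): the discriminant of $f$ with respect to $y$ is $x_1^{N_1}\cdots x_e^{N_e}u(\underline{x})$ with $u$ a unit, and the substitution $x_i=t_i^n$ turns it, up to sign, into $\prod_{1\le i<j\le n}(y_i(\underline{t})-y_j(\underline{t}))^2$, where the $y_i(\underline{t})$ are the elements of ${\rm Root}(f)$; since $\KK[[\underline{t}]]$ is factorial, this forces each $y(\underline{t})-z(\underline{t})$ (for $z\neq y$) to equal $\underline{t}^{p(z)}v_z(\underline{t})$ with $v_z(0)\neq0$, so ${\rm In}(y(\underline{t})-z(\underline{t}))=v_z(0)\underline{t}^{p(z)}$ is a monomial and ${\rm exp}(y(\underline{t})-z(\underline{t}))=p(z)$, the coordinate-wise least element of the support of $y-z$. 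To identify $p(z)$, set $j=\min\{i\mid\chi_{\underline{w}}(m_i)\neq1\}$ (which exists, as $z\neq y$). Property (ii) shows that any $p\in{\rm Supp}(f)$ with $|p|<|m_j|$ lies in $(n\ZZ)^e+\sum_{i<j}m_i\ZZ\subseteq\ker\chi_{\underline{w}}$, hence is absent from the support of $y-z$; in particular $|p(z)|\ge|m_j|$. Since $m_j$ does lie in that support, we also have $p(z)\le m_j$ coordinate-wise and $|p(z)|\le|m_j|$, whence $p(z)=m_j$. Finally, for each $k$, because $e_k=[M_k:M_{k-1}]>1$ there is an $\underline{w}$ with $\chi_{\underline{w}}$ trivial on $M_{k-1}$ but not on $M_k=M_{k-1}+m_k\ZZ$; then $\chi_{\underline{w}}(m_k)\neq1$ while $\chi_{\underline{w}}(m_i)=1$ for $i<k$, giving a root $z\neq y$ with ${\rm exp}(y-z)=m_k$. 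Thus $\{{\rm exp}(y-z)\mid z\in{\rm Root}(f)\setminus\{y\}\}=\{m_1,\ldots,m_h\}$.

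For part (ii): by the identification $p(z)=m_j$ above (and the convention $m_{h+1}=(+\infty,\ldots,+\infty)$, under which $y(\underline{t})\in R(k)$ for every $k$), a root $z=y(w_1t_1,\ldots,w_et_e)$ lies in $R(k)$ exactly when $\chi_{\underline{w}}(m_i)=1$ for $1\le i\le k-1$, i.e. when $\chi_{\underline{w}}$ is trivial on $M_{k-1}$; and this condition is constant on each fibre of $\underline{w}\mapsto z$. The number of such $\underline{w}$ is $n^e/[M_{k-1}:(n\ZZ)^e]=n^e/(e_1\cdots e_{k-1})=D_k=d_kn^{e-1}$, using $e_1\cdots e_{k-1}=D_1/D_k=n^e/D_k$ and $D_k=d_kn^{e-1}$; dividing by the common fibre size $n^{e-1}$ gives $|R(k)|=d_k$. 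As ${\rm exp}(y-z)$ always lies in $\{m_1,\ldots,m_h\}$ and $m_j\ge m_k$ iff $j\ge k$, we have $S(k)=R(k)\setminus R(k+1)$ and $Q(k)={\rm Root}(f)\setminus R(k)$, hence $|S(k)|=d_k-d_{k+1}$ (with $R(h+1)=\{y(\underline{t})\}$, $d_{h+1}=1$) and $|Q(k)|=n-d_k$.

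The only step requiring more than bookkeeping is the monomiality of ${\rm In}(y-z)$ in part (i): property (ii) by itself only pins down the total degree $|m_j|$ of the initial form, and it is precisely the quasi-ordinary hypothesis, read through factoriality of $\KK[[\underline{t}]]$ and the discriminant, that prevents the initial form from spreading over several monomials of degree $|m_j|$. All the remaining work is the lattice-index identity $[M_{k-1}:(n\ZZ)^e]=e_1\cdots e_{k-1}=n/d_k$ together with the fibre count $n^{e-1}$.
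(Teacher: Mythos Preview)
Your proof is correct. The monomiality step in part (i) is exactly the paper's argument: $y(\underline{t})-z(\underline{t})$ divides the discriminant $\underline{t}^{N}u(\underline{t}^n)$, and factoriality of $\KK[[\underline{t}]]$ (the $t_i$ being prime) forces each such factor to be a monomial times a unit. Beyond that point the paper gives no argument at all, simply saying ``the proof is the same as in the case of plane curves''; you instead supply a self-contained character-theoretic count. Writing each root as $y(w_1t_1,\ldots,w_et_e)$ and pairing $(U_n)^e$ with $\ZZ^e/(n\ZZ)^e$ via $\chi_{\underline{w}}$ lets you identify $\exp(y-z)$ with $m_j$ for $j=\min\{i:\chi_{\underline{w}}(m_i)\neq 1\}$, and then read off $|R(k)|$ as the number of characters trivial on $M_{k-1}$, divided by the fibre size $n^{e-1}$. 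This yields $|R(k)|=D_k/n^{e-1}=d_k$ directly from the lattice indices $[M_{k-1}:(n\ZZ)^e]=e_1\cdots e_{k-1}=n^e/D_k$, and the remaining counts follow. The gain of your route is that it makes the appearance of the $d_k$ transparent---they arise as orders of quotient character groups---without any appeal to the one-variable theory; the paper's ``proof'' buys brevity at the cost of leaving the reader to reconstruct precisely this argument.
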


\begin{proof}{.}{\rm The proof is the same as in the case of plane curves. Note that
given $z(\underline{t})\in {\rm Root}(f)$, since
$y(\underline{t})-z(\underline{t})$ divides the discriminant, then
$y(\underline{t})-z(\underline{t})=a.{\underline{t}}^{m}.u$, where
$a\in {\bf K}^*, m$ is a characteristic exponent of $f$, and $u$
is a unit in ${\bf K}[[\underline t]]$. In particular,
In$(y(\underline{t})-z(\underline{t}))=a.{\underline{t}}^m$.}
\end{proof}

 \noindent Let
 $\phi(\underline t)=(t_1^p,\ldots,t_e^p,Y(\underline{t}))$ and
 $\psi(\underline t)=(t_1^q,\ldots,t_e^q,Z(\underline{t}))$
be two nonzero elements of ${\bf K}[[\underline t]]^{e+1}$. We
define the contact between $\phi$ and $\psi$ to be the element
${\dfrac{1}{pq}}{\rm
exp}(Y(t_1^q,\ldots,t_e^q)-Z(t_1^p,\ldots,t_e^p))$. We denote it
by c$(\phi,\psi)$.

\noindent We define the contact between $f$ and $\phi$, denoted
c$(f,\phi)$, to be the maximal element in the set of contacts
between $\phi$ with the roots of $f(t_1^n,\ldots,t_e^n,y)=0$.

\noindent Let
$g=y^m+b_1(\underline{x})y^{m-1}+\ldots+b_m(\underline{x})$ be a
nonzero polynomial of ${\bf R}[y]$. Suppose that $g$ is an
irreducible quasi-ordinary polynomial and let $\psi(\underline
t)=(t_1^m,\ldots,t_e^m,Z(\underline{t}))$ be a root of
$g(x_1,\ldots,x_e,y)=0$. We define the contact between $f$ and
$g$, denoted c$(f,g)$, to be the contact between $f$ and $\psi$,
 and we recall that this definition
does not depend on the choice of the root $\psi$ of $g$. Note that
if $f.g$ is a quasi-ordinary polynomial, then ${\rm
In}(f(\psi(\underline{t}))=M(f(\psi(\underline{t}))$.

\noindent With these notations we have the following proposition:

\begin{proposition}{\rm  Let $g=y^m+b_1(\underline{x})y^{m-1}+\ldots+b_m(\underline{x})$ be an
 irreducible quasi-ordinary polynomial of ${\bf
R}[y]$ and suppose that $f.g$ is a quasi-ordinary polynomial. Let
$(D'_j)_{1\leq j\leq h'+1}$ (resp. $(d'_j)_{1\leq j\leq h'+1}$,
$(m'_j)_{1\leq j\leq h'}$)  be the set of characteristic sequences
associated with $g$. If $c$ denotes the contact c$(f,g)$, then we
have the following:

i) If for all $1\leq q\leq h, nc\notin M_q$, then $O(f,g)=n.m.c$.

ii) Otherwsie, let $1\leq q\leq h$ be the smallest integer
such that $nc\in M_{q}$, then
$O(f,g)=(r_qd_q+(nc-m_q)d_{q+1}).{\dfrac{m}{n}}$.

iii) If $nc\in M_{q}-M_{q-1}$ and $nc\not= m_q$,  then
$\dfrac{n}{d_{q+1}} |m$.}\end{proposition}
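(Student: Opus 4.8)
The plan is to compute $O(f,g)=\exp\bigl(g(t_1^n,\ldots,t_e^n,y(\underline t))\bigr)$ by expanding the product $g(t_1^n,\ldots,t_e^n,y(\underline t))=\prod_{i=1}^{m}\bigl(y(\underline t)-z_i(\underline t)\bigr)$, where the $z_i$ run over the roots of $g(t_1^n,\ldots,t_e^n,y)=0$ after substituting $t_j\mapsto t_j^{n}$, so that each $z_i$ is a power series in $t_1^{n/m},\ldots,t_e^{n/m}$, hence in the $n$-th root variables. Since $f\cdot g$ is quasi-ordinary, each factor $y(\underline t)-z_i(\underline t)$ divides the discriminant of $f\cdot g$, so $\mathrm{In}(y(\underline t)-z_i(\underline t))$ is a single monomial, and by definition of the contact the largest value of $\tfrac1n\exp(y-z_i)$ over $i$ equals $c=\mathrm{c}(f,g)$; thus each $\exp(y(\underline t)-z_i(\underline t))$ is a characteristic-type exponent bounded coordinate-wise by $nc$. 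The first step is therefore to sort the $m$ roots $z_i$ according to $\exp(y(\underline t)-z_i(\underline t))$, exactly as in Lemma 2.1, using the stratification of $\mathrm{Root}(g)$ by contact with the fixed branch $y$. Because $g$ is irreducible with characteristic data $(m'_j),(d'_j)$ and the two branches $f,g$ have contact $c$, the count of roots $z_i$ with a given value of $\exp(y-z_i)$ is governed by the refinement of $g$'s own gcd-sequence up to the "level" determined by $c$; summing $\exp(y-z_i)$ over all $i$ then gives $O(f,g)$.

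The second step is the case split. In case (i), $nc\notin M_q$ for every $q$, meaning the contact exponent $nc$ is \emph{not} one of $f$'s characteristic lattice points; then for every root $z_i$ of $g$ the exponent $\exp(y-z_i)$ is strictly dominated in the relevant sense and in fact all of them contribute the monomial of order $nc$ — more precisely $\exp(y(\underline t)-z_i(\underline t))=nc$ for all $i$ after rescaling by $m/n$ — so the product has $\exp$ equal to $m\cdot(nc)\cdot\tfrac1n\cdot n$; tracking the root-variable normalization carefully yields $O(f,g)=n\cdot m\cdot c$. In case (ii), let $q$ be minimal with $nc\in M_q$; then the roots of $g$ split into those that agree with $y$ only up to exponent $<nc$ and those that reach $nc$, and the partial sums of their $\exp(y-z_i)$ reproduce the recursion $r_q=e_{q-1}r_{q-1}+m_q-m_{q-1}$ used to define the $r_k$. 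Concretely one shows $\sum_i \exp(y-z_i)=\bigl(r_q d_q+(nc-m_q)d_{q+1}\bigr)$ up to the factor $m/n$ coming from the substitution $t_j\mapsto t_j^{m/n}$ in the roots of $g$; this is the bookkeeping heart of the argument and mirrors the classical plane-curve formula for $O(f,g)$ in terms of the semigroup generators of $f$ evaluated at the contact.

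For part (iii), assume $nc\in M_q\setminus M_{q-1}$ with $nc\neq m_q$. I would argue by contradiction: if $\tfrac{n}{d_{q+1}}\nmid m$, then consider the branch $g$ and its root $\psi=(t_1^m,\ldots,t_e^m,Z(\underline t))$. The contact $c$ between $f$ and $g$ forces $Z$ to coincide with (a conjugate of) $y$ up to the term of exponent $nc$ when both are written over a common root variable; rescaling, $mc$ must be an exponent in the support of $Z$, hence $mc\in(m\mathbf Z)^e+\sum_{|m'_j|\le |mc|}m'_j\mathbf Z$ by property (ii) of characteristic exponents applied to $g$. On the other hand, the condition $nc\in M_q\setminus M_{q-1}$ together with $nc\ne m_q$ pins down the image of $nc$ in the quotient $M_q/M_{q-1}\cong\mathbf Z/e_q$, and compatibility of the contact on both sides (the exponent $nc$ seen from $f$ equals $mc\cdot\tfrac n m$ seen from $g$) forces the order $\tfrac{n}{d_{q+1}}=e_1\cdots e_q$ of that quotient structure to divide $m$. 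Making this last matching precise — translating "$nc$ lies strictly inside the $q$-th lattice layer of $f$" into a divisibility constraint on $m$ via the index of $M_{q-1}$ in $M_q$ — is the step I expect to be the main obstacle, since it requires carefully identifying which lattice the exponent $mc$ must belong to from $g$'s perspective and matching the two normalizations $1/n$ and $1/m$ used in the definition of contact.
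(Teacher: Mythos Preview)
Your treatment of (i) and (ii) --- factor $g(t_1^n,\ldots,t_e^n,y(\underline{t}))$ over the roots of $g$, stratify them by contact with $y$, and sum the exponents --- is precisely what the paper intends by calling these parts ``obvious''; the bookkeeping in (ii) is the standard reduction to the recursion $r_{k+1}d_{k+1}=r_kd_k+(m_{k+1}-m_k)d_{k+1}$.

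For (iii), the paper's argument is different from yours and resolves exactly the step you flag as the obstacle. It does not proceed by contradiction and does not pass through the quotient $M_q/M_{q-1}$. Instead, from $nc\in M_q\setminus M_{q-1}$ with $nc\neq m_q$ the paper infers that the supports of $Y(t_1^m,\ldots,t_e^m)$ and $Z(t_1^n,\ldots,t_e^n)$ agree up through the exponent $m\cdot m_q$, so the lattice spanned by $mn\cdot M(e,e)$ together with $m\cdot m_1,\ldots,m\cdot m_q$ equals the lattice spanned by $mn\cdot M(e,e)$ together with $n\cdot m'_1,\ldots,n\cdot m'_q$. Equating the gcd of the $(e,e)$ minors of the two generating matrices gives $m^e D_{q+1}=n^e D'_{q+1}$; since $D_{q+1}=n^{e-1}d_{q+1}$ and $D'_{q+1}=m^{e-1}d'_{q+1}$, this simplifies to $m\,d_{q+1}=n\,d'_{q+1}$, that is, $m=\dfrac{n}{d_{q+1}}\cdot d'_{q+1}$. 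This is an \emph{equality}, stronger than the bare divisibility you aim for, and it replaces your lattice-matching step with a one-line determinant computation. Note also that your proposed passage through $M_q/M_{q-1}\cong{\bf Z}/e_q{\bf Z}$ could at best constrain the single factor $e_q$; the minor-gcd argument captures the full product $e_1\cdots e_q=n/d_{q+1}$ in one stroke, which is why the paper's route closes the gap where yours remains open.
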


\begin{proof}{.}{\rm i) and ii) are obvious. To prove iii) let $\phi=(t_1^n,\ldots,t_e^m,Y(\underline{t}))$
(resp. $\psi=(t_1^m,\ldots,t_e^m,Z(\underline{t}))$) be a root of
$f(\underline{x},y)=0$ (resp. $g(\underline{x},y)=0$) and  remark
that if $nc\in M_{q}-M_{q-1}$ and $nc\not= m_q$ then the exponents of
$Z({t_1}^n,...,{t_e}^n)$ coincide with those of
$Y({t_1}^m,...,{t_e}^m)$ till at least $m_q.m$. Write
$Y(\underline{t})=\sum_ic_i{\underline{t}}^i$ and
$Z(\underline{t})=\sum_jc'_j\underline{t}^j$, then for all $i\in
M_{q+1}$ in Supp$(Y)$, there exists $j\in {\rm Supp}(Z)$ such that
$i.m=j.n$. But the gcd of minors of the matrix
$(m.nM(e,e),t_{m.m_1},\ldots,t_{m.m_q})$ is $m^e.D_{q+1}$, and the
gcd of minors of the matrix
$(m.nM(e,e),t_{n.m'_1},\ldots,t_{n.m'_q})$ is
 $n^e.D'_{q+1}$. Thus
$m^e.D_{q+1}=n^e.D'_{q+1}$, in particular
$m^e.n^{e-1}d_{q+1}=n^e.m^{e-1}.d'_{q+1}$. This implies that
$m=\displaystyle{n\over d_{q+1}}.d'_{q+1}$, which proves our
assertion.}
\end{proof}

\section{Pseudo roots and generators of the semigroup}

 \noindent Let the notations be as in section 2 and let $q \in \NN,1 \leq q \leq h+1$.
 Let
 $y(\underline{t})=\sum c_p\underline{t}^p\in {\rm Root}(f)$ and
consider the truncation $\bar{y}(\underline{t})=\sum_{p\in
M_q}{c_p.\underline{t}^p}$ of $y$. Let $G_q(\underline{x},y) \in
\RR[y]$ be the minimal polynomial of
$\bar{y}(\underline{x}^{1\over n})$ over $\KK((\underline{x}))$.
Then $G_q$ is a quasi-ordinary polynomial of degree
$\dfrac{n}{d_q}$ in $y$, and $G_q(t_1^{n\over
d_q},\ldots,t_e^{n\over d_q},\bar{y}(\underline{t}^{1\over
d_q}))=0$. Furthermore, there exist $\displaystyle{n\over d_q}$
distinct elements $(\rho_1^i,\ldots,\rho_{n\over d_q}^i)_{1\leq
i\leq {n\over d_q}}$ in $(U_{n\over d_q})^e$,  where  $U_{n\over
d_q}$ denotes the set of $\displaystyle{n\over d_q}$th roots of
unity in ${\bf K}$, such that:

$$
G(t_1^{n\over d_q},\ldots,t_e^{n\over d_q},y)= \prod_{i=1}^{n\over
d_q}(y-\bar{y}(\rho_1^it_1^{1\over d_q},\ldots,\rho_{n\over
d_q}^it_e^{1\over d_q}))
$$

\noindent We call $G_q$ a $d_q$th pseudo root of $f$. With the
notations of Section 2, $c(f,G_q)=m_q$, and consequently by
Proposition 2.2. ii), $O(f,G_q)=r_q$.

\noindent Let $G=(G_1,\ldots,G_h,G_{h+1})$ be a set of $d_k$th
pseudo roots of $f$, $1\leq k\leq h+1$, and recall that
deg$_y(G_1)=1$ and that $G_{h+1}=f$. Let $B(G)=\lbrace
\underline{\theta}\in {\bf N}^{h+1}; 0\leq \theta_k < e_k$ for all
$1\leq k\leq h$ and $\theta_{h+1}< +\infty\rbrace$. Given two
elements $\underline{\theta}^1,\underline{\theta}^2 \in B(G)$, and
two elements $\underline{\gamma}^1,\underline{\gamma}^2\in {\bf
N}^e$, if $\theta_{h+1}^1=\theta_{h+1}^2$ and
$\underline{\theta}^1\not=\underline{\theta}^2$ then
$\sum_{i=1}^e\gamma^1_i.r_0^i+\sum_{k=1}^h\theta^1_kr_k\not=
\sum_{i=1}^e\gamma^2_i.r_0^i+\sum_{k=1}^h\theta^2_kr_k$.

\noindent Let $F(\underline{x},y)$ be a monic polynomial of ${\bf
R}[y]$ and let:

$$
F=\sum_{\underline{\theta}\in
B(G)}c_{\theta}(\underline{x})G_1^{\theta_1}.\ldots.G_h^{\theta_h}.G_{h+1}^{\theta_{h+1}}
$$

\noindent be the $G$-adic expansion of $F$.  Let
Supp$_G(F)=\lbrace \underline{\theta}\in B(G), c_{\theta}\not=
0\rbrace$ and let $B'(G)=\lbrace \underline{\theta}\in {\rm
Supp}_G(F); \theta_{h+1}=0\rbrace$. Clearly $f$ divides $F$ if and
only if $B'(G)=\emptyset$. Otherwise, there is a unique
$\underline{\theta}^0\in {\rm Supp}_G(F)$ such that
$O(f,F)=O(f,c_{\underline{\theta}}(\underline{x})G_1^{\theta_1}.\ldots.G_h^{\theta_h})=O(f,c_{\underline{\theta}}(\underline{x}))+\sum_{i=1}^h\theta_ir_i$.
In particular, $r_0^1,\ldots,r_0^e, r_1,\ldots,r_h$ generate
$\Gamma(f)$.





\section{Approximate roots of a quasi-ordinary polynomial}

\noindent Let the notations be as in Section 2, and let
$y(\underline{t})=\sum_{p}c_{p}{\underline{t}}^p\in$ Root$(f)$.
Given $1\leq q\leq h$ and $z(\underline{t})\in {\rm Root}(f)$,
there exists $w(z)\in U_n$ such that the coefficient of $t^{m_q}$
in the expansion of $z(\underline{t})$ is $w(z).c_{m_q}$. Let
$Q(q)$ (resp. $R(q)$, resp. $S(q)$) be the set of elements of
Root$(f)$ whose contact with $y(\underline{t})$ is $< m_q$ (resp.
$\geq m_q$, resp. $=m_q$) and let $\zeta$ be an element of $\KK$.
It follows from Lemma 2.1. that:

 $$
 \prod_{z(\underline{t})\in
 R(q)}{(\zeta-w(z).c_{m_q})}=(\zeta^{e_q}-c_{m_q}^{e_q})^{d_{q+1}}
 $$

\noindent On the other hand, if $q\geq 2$, since:

$$
\prod_{z(\underline{t}) \in
Q(q)}{(y(\underline{t})-z(\underline{t}))}
=\prod_{k=1}^{q-1}\prod_{z(\underline{t}) \in
S(k)}{(y(\underline{t})-z(\underline{t}))}
$$

\noindent then

$$
{\rm exp}(\prod_{z(\underline{t}) \in
Q(q)}{(y(\underline{t})-z(\underline{t}))})=\sum_{k=1}^{q-1} {\rm
exp}(\prod_{z(\underline{t}) \in
S(k)}{(y(\underline{t})-z(\underline{t})}))
$$
$$
=\sum_{k=1}^{q-1}(d_k-d_{k+1}).m_k
=m_1d_1+\sum_{k=1}^{q-2}(m_{k+1}-m_k)d_{k+1}-m_{q-1}d_q
$$

$$
=r_1d_1+\sum_{k=1}^{q-2}(r_{k+1}d_{k+1}-r_kd_k)-m_{q-1}d_q=r_{q-1}.d_{q-1}-m_{q-1}.d_q
$$

\noindent Consequently

\[{\rm exp}(\prod_{z(\underline{t}) \in
Q(q)}{(y(\underline{t})-z(\underline{t}))})=
\begin {cases}r_{q-1}.d_{q-1}-m_{q-1}.d_q & \text{ if $q \geq 2$}\\
0 &\text{if $q=1$}
\end {cases}
\]




\noindent Let $Z$ be an indeterminate and define a $(q,Z)$
deformation of $y(\underline{t})$  to by any $y^*(Z,\underline{t})
\in \KK(Z)[[\underline{t}]]$  such that
$$
{\rm In}(y^*(Z,\underline{t})-\sum_{p\in
M_q}{c_p.\underline{t}^p})=Z.\underline{t}^{m_q}.
$$

\noindent Equivalently a $(q,Z)$-deformation
$y^*(Z,\underline{t})$ of $y(\underline{t})$ is any element
$y^*(Z,\underline{t})\in \KK(Z)[[\underline{t}]]$ such that:

$$
y^*(Z,\underline{t})=y(\underline{t})+(Z-c_{m_q}).\underline{t}^{m_q}+u(Z,\underline{t})
$$

\noindent where $O_{\underline{t}}(u(Z,\underline{t})) >|m_q|$.
Let $z(\underline{t})\in {\rm Root}(f)$  and let
$y^*(Z,\underline{t})$ be a $(q,Z)$ deformation of
$y(\underline{t})$. We want to calculate the contact between
$y^*(Z,\underline{t})$ and $z(\underline{t})$.  Note that:

$$
y^*(Z,\underline{t})-z(\underline{t})=(Z-c_{m_q}).\underline{t}^{m_q}+y(\underline{t})-z(\underline{t})+u(\underline{t},Z)
$$

\noindent It follows that if  $z(\underline{t})\in Q(q)$, then
${\rm In} (y^*(Z, \underline{t})-z(\underline{t}))={\rm
In}(y(\underline{t})-z(\underline{t}))$. In particular:


 \[(1) \quad {\rm In}\prod_{z(\underline{t})\in
Q(q)}{(y^*(Z,\underline{t})-z(\underline{t}))} =
\begin {cases}  a_1  &\text {if $q=1$}\\
a_qt^{r_{q-1}.d_{q-1}-m_{q-1}d_q} & \text  {if $q \geq 2$}
\end {cases}
\]

\noindent where for all $q\geq 1$, $a_q$ is a nonzero constant of
$\KK$. On the other hand, if  $z(\underline{t}) \in R(q)$, then
exp$(z(\underline{t})-y(\underline{t})) \geq m_q$, then
Inco$(y^*(Z, \underline{t})-z(\underline{t}))=
 (Z-c_{m_q})+(c_{m_q}-w(z)c_{m_q})=Z-w(z)c_{m_q}$, in particular
 ${\rm In}(y^*(Z, \underline{t})-z(\underline{t}))= (Z-w(z)c_{m_q})\underline{t}^{m_q}$. Consequently

$$
{\rm In}(\prod_{z(\underline{t})\in
R(q)}{(y^*(Z,\underline{t})-z(\underline{t}))})
=(Z^{e_q}-c_{m_q}^{e_q})^{d_{q+1}}.\underline{t}^{m_q.d_q}.
$$
\noindent Now

$$
f(t_1^n,\ldots,t_e^n,y^*(Z,\underline{t}))=\prod_{z(\underline{t})\in
Q(q)}{(y^*(Z,\underline{t})-z(\underline{t}))}.\prod_{z(\underline{t})\in
R(q)}{(y^*(Z,\underline{t})-z(\underline{t}))}
$$

\noindent and $r_qd_q=r_{q-1}d_{q-1}+m_qd_q-m_{q-1}d_q$, in
particular:

$$
{\rm In}(f(t_1^n,\ldots,t_e^n,y^*))=\alpha
(Z^{e_q}-y_{m_q}^{e_q})^{d_{q+1}}.\underline{t}^{r_q.d_q}.
$$

\noindent where $\alpha\in {\bf K}^*$.

\begin {lemma}{\rm Let $q \in \NN, 1 \leq q \leq h$.
Let $F=F(\underline {x},y) \in \RR [y]$  such that deg$_yF
<\dfrac{n}{d_q}$. Let $y^*(Z,\underline{t})$ be a
$(q,Z)$-deformation of $y(\underline{t})$. Then
inco$(F(t_1^n,\ldots,t_e^n,y^*(Z,\underline{t})))\in{\bf K}^*$.}

\end {lemma}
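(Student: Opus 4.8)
The plan is to expand $F$ $G$-adically in the pseudo roots $G_1,\dots,G_{q-1}$, to check that a $(q,Z)$-deformation of $y(\underline{t})$ leaves the initial form of each such $G_j$ (evaluated along $y$) unchanged, and then to add up the resulting terms using the non-cancellation of $G$-adic exponents already recorded in Section 3. One may assume $q\ge 2$: if $q=1$ then $\deg_yF<n/d_1=1$, hence $F\in\RR$ and $F(t_1^n,\dots,t_e^n,y^*)=F(t_1^n,\dots,t_e^n)\in\KK[[\underline{t}]]$, whose initial coefficient is in $\KK^*$.

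First I would write the $G$-adic expansion of $F$ with respect to $G=(G_1,\dots,G_{h+1})$. Since $\deg_yG_k=n/d_k=\prod_{i<k}e_i$ and $\sum_{k<q}(e_k-1)(n/d_k)=n/d_q-1$, the hypothesis $\deg_yF<n/d_q$ forces $\theta_k=0$ for $k\ge q$ in every term, so $F=\sum_{\underline{\theta}}c_{\underline{\theta}}(\underline{x})\,G_1^{\theta_1}\cdots G_{q-1}^{\theta_{q-1}}$ with $0\le\theta_k<e_k$ for $k<q$. Now comes the key point: for each $j$, $1\le j\le q-1$, I claim that ${\rm In}(G_j(t_1^n,\dots,t_e^n,y^*(Z,\underline{t})))={\rm In}(G_j(t_1^n,\dots,t_e^n,y(\underline{t})))$; in particular this initial form does not involve $Z$ and its initial coefficient lies in $\KK^*$. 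Indeed $G_j$ is monic, so $G_j(t_1^n,\dots,t_e^n,y)=\prod_\eta(y-\eta(\underline{t}))$ over its $n/d_j$ roots in $\KK[[\underline{t}]]$, and these roots are the distinct conjugates $\bar y_j(w_1t_1,\dots,w_et_e)$, $\underline{w}\in(U_n)^e$, of the truncation $\bar y_j=\sum_{p\in M_{j-1}}c_p\underline{t}^p$ of $y$ that defines $G_j$ (the lattice $M_{j-1}$ being forced by $\deg_yG_j=[M_{j-1}:(n\ZZ)^e]=n/d_j$). Each such root has support in $M_{j-1}$, and as $m_j\notin M_{j-1}$ with $c_{m_j}\ne 0$, the series $y(\underline{t})-\eta(\underline{t})$ has a nonzero coefficient at $\underline{t}^{m_j}$; hence $O_{\underline{t}}(y(\underline{t})-\eta(\underline{t}))\le|m_j|<|m_q|$, the last inequality since $m_j<m_q$ coordinate-wise and $m_j\ne m_q$. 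A $(q,Z)$-deformation satisfies $y^*(Z,\underline{t})-y(\underline{t})=(Z-c_{m_q})\underline{t}^{m_q}+u(Z,\underline{t})$ with $O_{\underline{t}}(u)>|m_q|$, so $O_{\underline{t}}(y^*-y)=|m_q|>O_{\underline{t}}(y-\eta)$ for every root $\eta$, and therefore ${\rm In}(y^*(Z,\underline{t})-\eta(\underline{t}))={\rm In}(y(\underline{t})-\eta(\underline{t}))$. Multiplying over the $n/d_j$ roots — initial forms being multiplicative in the polynomial ring $\KK(Z)[t_1,\dots,t_e]$ — proves the claim.

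To conclude I would put the two steps together. By the claim, for each $\underline{\theta}$ in the $G$-support of $F$ the product $c_{\underline{\theta}}(t_1^n,\dots,t_e^n)\prod_{k<q}G_k(t_1^n,\dots,t_e^n,y^*)^{\theta_k}$ has the same initial form as its analogue with $y$ in place of $y^*$, so the two differ by a series of strictly larger $\underline{t}$-order. On the other hand, by the distinctness property recalled in Section 3 the $y$-terms have pairwise distinct exponents ${\rm exp}(c_{\underline{\theta}}(t_1^n,\dots,t_e^n))+\sum_{k<q}\theta_kr_k$; hence among those $y$-terms of minimal $\underline{t}$-order, the one with lex-largest exponent contributes a monomial that is not cancelled, so $O_{\underline{t}}(F(t_1^n,\dots,t_e^n,y(\underline{t})))$ is exactly that minimal order. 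Summing over $\underline{\theta}$, $F(t_1^n,\dots,t_e^n,y^*)=F(t_1^n,\dots,t_e^n,y(\underline{t}))+(\text{a series of strictly larger }\underline{t}\text{-order})$, whence ${\rm In}(F(t_1^n,\dots,t_e^n,y^*))={\rm In}(F(t_1^n,\dots,t_e^n,y(\underline{t})))$ and so ${\rm inco}(F(t_1^n,\dots,t_e^n,y^*(Z,\underline{t})))={\rm inco}(F(t_1^n,\dots,t_e^n,y(\underline{t})))$. The right-hand side is in $\KK^*$ because $\deg_yF<n=\deg_yf$ forces $f\nmid F$, so $F(t_1^n,\dots,t_e^n,y(\underline{t}))$ is a nonzero element of $\KK[[\underline{t}]]$.

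The main obstacle I anticipate is the key claim in the second paragraph: one must correctly identify the roots of $G_j(t_1^n,\dots,t_e^n,y)$ with the conjugate truncations of $y(\underline{t})$ supported on $M_{j-1}$, and observe that they therefore split off from $y(\underline{t})$ at order at most $|m_j|<|m_q|$. This is precisely the statement that a level-$q$ deformation is invisible to the pseudo roots of level $<q$; once it is established, the remainder is routine manipulation of the $G$-adic expansion together with the non-cancellation of its exponents from Section 3.
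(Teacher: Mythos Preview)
Your proof is correct and follows essentially the same line as the paper's: both dispose of $q=1$ trivially, expand $F$ in the pseudo roots $G_1,\dots,G_{q-1}$, argue that a $(q,Z)$-deformation does not disturb the initial form of each $G_k$ for $k<q$ (because the contact $c(f,G_k)=m_k<m_q$), and then conclude by the non-cancellation of the exponents $n\gamma+\sum_{k<q}\theta_k r_k$ recorded in Section~3. Your write-up is more detailed in two places: you justify the key claim by looking at the individual roots $\eta$ of $G_j(t_1^n,\dots,t_e^n,y)$ and bounding $O_{\underline t}(y-\eta)\le|m_j|<|m_q|$, whereas the paper simply invokes $c(f,G_k)=m_k<m_q$ and the computation labelled~(1); and at the end you deduce ${\rm In}(F(\dots,y^*))={\rm In}(F(\dots,y))$ and read off the initial coefficient from the nonzero series $F(\dots,y)\in\KK[[\underline t]]$, while the paper isolates the unique $\underline\theta^0$ and multiplies initial coefficients. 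These are packaging differences rather than a different strategy.
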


\begin {proof}{.} If  $q=1$,  then deg$_y(F)=0$, in particular $F(\underline{x},y) \in \RR$, and
 $F(t_1^n,\ldots,t_e^n,y^*(Z,\underline{t})) \in \KK[[\underline{t}]])$. Let  $q \geq 2$  and for all $1\leq k < q$, let
$G_k(\underline{x},y)$ be a pseudo $d_k$th root of $f$. Let
$G^q=(G_1,\ldots,G_{q-1})$ and let $B(G^q)=\lbrace
(\theta_1,\ldots,\theta_{q-1}); 0 \leq \theta_k < e_k$ for all
$1\leq k < q\rbrace$. Let:

$$
F=\sum_{\underline{\theta}\in
B(G^q)}{c_{\underline{\theta}}(\underline{x}).G_1^{\theta_1}.\ldots.G_{q-1}^{\theta_{q-1}}}
$$

\noindent be the $G^q$-adic expansion of $F$. Since $O(f,G_k)={\rm
exp}(G_k(t_1^n,\ldots,t_e^n,y(\underline{t})))=r_k$ and
$c(f,G_k)=m_k < m_q$, then
exp$(G_k(t_1^n,\ldots,t_e^n,y^*(Z,\underline{t})))=r_k$. In
particular there is a unique ${\underline{\theta}}^0\in B(G^q)$
such that:

$$
{\rm exp}(F(t_1^n,\ldots,t_e^n,y^*(Z,\underline{t})))={\rm
exp}(c_{\underline{\theta}^0}(t_1^n,\ldots,t_e^n))+\sum
_{k=1}^{q-1}{\theta^0_k.{\rm
exp}(G_k(t_1^n,\ldots,t_e^n,y^*(Z,\underline{t})))} $$
$$={\rm
exp}(c_{\underline{\theta}_0}(t_1^n,\ldots,t_e^n))+\sum
_{k=1}^{q-1}{\theta^0_kr_k}.
$$

\noindent   In particular:

$$
{\rm In} (F(t_1^n,\ldots,t_e^n,y^*(Z,\underline{t})))={\rm
In}(c_{\underline{\theta}^0}(t_1^n,\ldots,t_e^n).(G_1^{\theta_1}.\ldots.G_{q-1}^{\theta_{q-1}})(t_1^n,\ldots,t_e^n,y^*(Z,\underline{t}))).
$$

\noindent But ${\rm
inco}((c_{\underline{\theta}^0}(t_1^n,\ldots,t_e^n))\in {\bf K}^*$
and by (1), inco$(g_k(t_1^n,\ldots,t_e^n,y^*(Z,\underline{t}))\in
{\bf K}^*$ for all $1\leq k\leq q-1$. This implies our assertion.

\end {proof}

\begin {lemma}{\rm Let $q\in \NN, 2\leq q\leq h$ and let $g=g(\underline{x},y) \in
\RR[y]$  be a  monic polynomial of degree $\dfrac{n}{d_q}$ in $y$.
Let $y^*(Z, \underline{t})$ be a $(q,Z)$-deformation of $y(t)$. If
${\rm In}(g(t_1^n,\ldots,t_e^n,y^*(Z,\underline{t})))=
\alpha.Zt^{r_q}$,  $\alpha\in {\bf K}^*$,  then ${\rm In}(
(\tau_fg)(t_1^n,\ldots,t_e^n,y^*(Z,\underline{t})))=\alpha.Zt^{r_q}$.}

\end {lemma}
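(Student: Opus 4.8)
The plan is to pass to the $g$-adic expansion $f=g^{d_q}+\sum_{i=1}^{d_q}a_i\,g^{d_q-i}$ of $f$ (since $\deg_yg=\tfrac{n}{d_q}$ the relevant integer is $d=d_q$, and $\deg_ya_i<\tfrac{n}{d_q}$), so that $\tau_f(g)=g+d_q^{-1}a_1$, and to reduce the claim to the single inequality $O_{\underline{t}}\bigl(a_1(t_1^n,\dots,t_e^n,y^*(Z,\underline{t}))\bigr)>|r_q|$. Indeed, if $a_1=0$ then $\tau_f(g)=g$ and there is nothing to prove; and granting that inequality, $(\tau_f g)(t_1^n,\dots,t_e^n,y^*)=g(t_1^n,\dots,t_e^n,y^*)+d_q^{-1}a_1(t_1^n,\dots,t_e^n,y^*)$ has the same lowest homogeneous $\underline{t}$-component as $g(t_1^n,\dots,t_e^n,y^*)$, which by hypothesis is $\alpha Z\,\underline{t}^{r_q}$.

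The key auxiliary fact I would establish is that, for $1\le i\le d_q$, the initial form ${\rm In}\bigl(a_i(t_1^n,\dots,t_e^n,y^*(Z,\underline{t}))\bigr)$ is free of $Z$, i.e.\ lies in $\KK[[\underline{t}]]$ rather than only in $\KK(Z)[[\underline{t}]]$ (and is nonzero when $a_i\ne0$, by Lemma 4.3). This goes exactly as in the proof of Lemma 4.3: write the $G^q$-adic expansion $a_i=\sum_{\underline{\theta}}c^{(i)}_{\underline{\theta}}(\underline{x})\,G_1^{\theta_1}\cdots G_{q-1}^{\theta_{q-1}}$ with $G^q=(G_1,\dots,G_{q-1})$; the term of minimal $\underline{t}$-order in $a_i(t_1^n,\dots,t_e^n,y^*)$ is unique by the distinctness of the numbers $\sum_k\theta_kr_k$ recalled in Section 3, the coefficients $c^{(i)}_{\underline{\theta}}(t_1^n,\dots,t_e^n)$ carry no $Z$, and for $k<q$ one has ${\rm In}\bigl(G_k(t_1^n,\dots,t_e^n,y^*)\bigr)={\rm In}\bigl(G_k(t_1^n,\dots,t_e^n,y(\underline{t}))\bigr)$ because $c(f,G_k)=m_k<m_q$ (this is the computation used in the proof of Lemma 4.3). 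Hence ${\rm In}(a_i(t_1^n,\dots,t_e^n,y^*))$ is a product of $Z$-free forms, so $Z$-free.

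I would then substitute $y=y^*(Z,\underline{t})$ into the $g$-adic expansion. From the hypothesis, ${\rm In}\bigl(g(t_1^n,\dots,t_e^n,y^*)^{d_q-i}\bigr)=\alpha^{d_q-i}Z^{d_q-i}\underline{t}^{r_q(d_q-i)}$. Setting $\nu=\min_{0\le i\le d_q}\bigl(O_{\underline{t}}(a_i(t_1^n,\dots,t_e^n,y^*))+(d_q-i)|r_q|\bigr)$ (with $a_0=1$) and letting $I$ be the set of indices realizing it, the homogeneous component of degree $\nu$ of $f(t_1^n,\dots,t_e^n,y^*)$ is $\sum_{i\in I}{\rm In}(a_i(t_1^n,\dots,t_e^n,y^*))\,\alpha^{d_q-i}\underline{t}^{r_q(d_q-i)}\,Z^{d_q-i}$, which is nonzero since distinct $i\in I$ contribute distinct powers of $Z$ with nonzero $Z$-free coefficients. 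Thus $O_{\underline{t}}(f(t_1^n,\dots,t_e^n,y^*))=\nu$; but the computation preceding Lemma 4.3 gives $O_{\underline{t}}(f(t_1^n,\dots,t_e^n,y^*))=d_q|r_q|$ and ${\rm In}(f(t_1^n,\dots,t_e^n,y^*))=\alpha'(Z^{e_q}-c_{m_q}^{e_q})^{d_{q+1}}\underline{t}^{r_qd_q}$ with $\alpha'\in\KK^*$. So $\nu=d_q|r_q|$, and comparing the coefficient of $Z^{d_q-i}$ for $i\in I$ shows that the coefficient of $Z^{d_q-i}$ in $(Z^{e_q}-c_{m_q}^{e_q})^{d_{q+1}}$ is nonzero; since that is a polynomial in $Z^{e_q}$ all of whose coefficients $\binom{d_{q+1}}{j}(-c_{m_q}^{e_q})^{d_{q+1}-j}$ are nonzero (characteristic zero, $c_{m_q}\ne0$), this forces $e_q\mid(d_q-i)$, hence $e_q\mid i$ because $e_q\mid d_q$. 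As $e_q=D_q/D_{q+1}>1$ we get $1\notin I$, which together with $\nu=d_q|r_q|$ is precisely $O_{\underline{t}}(a_1(t_1^n,\dots,t_e^n,y^*))>|r_q|$, and the reduction of the first paragraph finishes the proof.

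The step I expect to be the main obstacle is the $Z$-freeness of ${\rm In}(a_i(t_1^n,\dots,t_e^n,y^*))$ together with the bookkeeping of $Z$-degrees: one must argue, along the lines of the proof of Lemma 4.3, that the minimal $\underline{t}$-order term of the $G^q$-adic expansion is unique and carries no $Z$, and one must rule out any cancellation among the different powers of $Z$ when passing to initial forms — it is here that the explicit shape of ${\rm In}(f(t_1^n,\dots,t_e^n,y^*))$ as a polynomial in $Z^{e_q}$ and the inequality $e_q\ge2$ are indispensable.
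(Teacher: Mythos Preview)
Your argument is correct and follows the paper's proof essentially line for line: the same reduction via the $g$-adic expansion to showing that $a_1$ has order strictly greater than $|r_q|$, the same use of Lemma~4.1 (which you cite as ``Lemma~4.3'') to get $Z$-freeness of ${\rm In}(a_i(\ldots,y^*))$, the same comparison of the two computations of ${\rm In}(f(\ldots,y^*))$ to deduce that only exponents $Z^{d_q-i}$ with $e_q\mid i$ survive, and hence $1\notin I$. The only discrepancies are the lemma numbering and your use of the scalar $O_{\underline t}$ where the paper tracks the vector ${\rm exp}$, which is cosmetic.
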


\begin {proof}{.} Let

$$f
=g^{d_q}+a_1g^{d_q-1}+\ldots+a_{d_q}
$$

\noindent  be the $g$-adic expansion of $f$, and  recall that
$\tau_f(g)=g+d_q^{-1}a_1$. We need to show that $r_q < {\rm
exp}(a_1(t_1^n,\ldots,t_e^n,y^*(Z,\underline{t})) )$. We
 have

 $$
 f(t_1^n,\ldots,t_e^n,y^*(Z,\underline{t}))=\sum_{k=0}^{d_q}{a_k(t_1^n,\ldots,t_e^n,y^*(Z,\underline{t})).g^{d_q-k}(t_1^n,\ldots,t_e^n,y^*(Z,\underline{t}))}
 $$

 \noindent where $a_0=1$. Let

 $$
 u= {\rm inf }\lbrace {\rm exp}(a_k(t_1^n,\ldots,t_e^n,y^*(Z,\underline{t})).g^{d_q-k}(t_1^n,\ldots,t_e^n,y^*(Z,\underline{t})));
  0 \leq k\leq d_q \rbrace.
 $$

\noindent Since $a_0=1$ and exp
$(g^{d_q}(t_1^n,\ldots,t_e^n,y^*(Z,\underline{t})))=d_q.r_q$, then
$u\in{\bf N}^e$. Let

$$
I= \lbrace 0 \leq k \leq d_q; {\rm exp}
(a_k(t_1^n,\ldots,t_e^n,y^*(\underline{t},Z)).g^{d_q-k}(t_1^n,\ldots,t_e^n,y^*(\underline{t},Z)))=u
\rbrace.
$$

\noindent then for all $k \in I$,
$a_k(t_1^n,\ldots,t_e^n,y^*(Z,\underline{t}))\not =0$ and, by
lemma 4.1., inco$(a_k(t_1^n,\ldots,t_e^n,y^*(Z,\underline{t}))) =
\alpha_k \in \KK^*$. Consequently inco$(
a_k(t_1^n,\ldots,t_e^n,y^*(Z,\underline{t})).g^{d_q-k}(t_1^n,\ldots,t_e^n,y^*(Z,\underline{t})))=\alpha_k.\alpha^{d_q-k}.Z^{d_q-k}$
for all $k\in I$. In particular In$(f(t_1^n,
\ldots,t_e^n,y^*(Z,\underline{t})))=(\sum_{k \in
I}{\alpha_k.\alpha^{d_q-k}.Z^{d_q-k}}).t^u$. But

$$
{\rm In}(f(t_1^n, \ldots,t_e^n,y^*(\underline{t},Z)))=a
(Z^{e_q}-y_{m_q}^{e_q})^{d_{q+1}}.t^{r_q.d_q}, a\in\KK^*
$$

\noindent so:

$$
u=r_q.d_q \quad {\rm and} \quad \sum_{k \in
I}\alpha_k.\alpha^{d_q-k}.Z^{d_q-k}=a(Z^{e_q}-y_{m_q}^{e_q})^{d_{q+1}},
$$

\noindent in particular $\sum_{k \in
I}\alpha_k.\alpha^{d_q-k}.Z^{d_q-k}\in \KK [Z^{e_q}]$. On the
other hand $e_q=\dfrac{d_q}{d_{q+1}}$ doesn't divide $d_q-1$, then
$d_q-1 \not \in I$, so  $u <{\rm exp}
(a_1(t_1^n,\ldots,t_e^n,y^*(Z,\underline{t})).g^{d_q-1}(t_1^n,\ldots,t_e^n,y^*(Z,\underline{t})))={\rm
exp}(a_1(t_1^n,\ldots,t_e^n,y^*(Z,\underline{t})))+(d_q-1).r_q$.
This  proves our assertion.

\end {proof}

\noindent As a corollary we get the following theorem:

\begin{theorem}{\rm Let the notations be as above, and let $d_1,\ldots,d_{h}, d_{h+1}=1$ be the
gcd-sequence of $f$. Then $O(f,{\rm App}_{d_k}(f))=r_k$ for all
$1\leq k\leq h$.}
\end{theorem}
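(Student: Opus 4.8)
The plan is to induct on $k$, at each stage comparing the approximate root $\mathrm{App}_{d_k}(f)$ with a $d_k$th pseudo root $G_k$ of $f$, both of which are monic of degree $n/d_k$ in $y$. Since $c(f,G_k)=m_k$ and $O(f,G_k)=r_k$ by the discussion in Section~3, the aim is to upgrade this to $\mathrm{In}(G_k(t_1^n,\ldots,t_e^n,y^*(Z,\underline t)))=\alpha Z\,\underline t^{\,r_k}$ for a $(k,Z)$-deformation $y^*$, and then to transport this property from $G_k$ to $\mathrm{App}_{d_k}(f)$ via the Tschirnhausen transform using Lemma~4.2.

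First I would verify the base case and the ``initial monomial'' claim for pseudo roots. Plugging the $(q,Z)$-deformation $y^*$ into the factorization $G_q(t_1^{n/d_q},\ldots,y)=\prod(y-\bar y(\rho_1^i t_1^{1/d_q},\ldots))$, and noting that the truncation $\bar y$ retains exactly the exponents in $M_q$ while $y^*$ agrees with $y$ through $\underline t^{\,m_q}$ with the $\underline t^{\,m_q}$-coefficient replaced by $Z$, one reads off that exactly one factor contributes a $Z-$term and the rest contribute units; this gives $\mathrm{In}(G_q(t_1^n,\ldots,t_e^n,y^*))=\alpha Z\,\underline t^{\,r_q}$ with $\alpha\in\KK^*$. (This is the analogue for pseudo roots of the computation $\mathrm{In}(f(t_1^n,\ldots,y^*))=\alpha(Z^{e_q}-y_{m_q}^{e_q})^{d_{q+1}}\underline t^{\,r_q d_q}$ already carried out in the text, but with $d_q$ replaced by $1$ in the relevant exponent.) For $k=1$ both $\mathrm{App}_{d_1}(f)=y+n^{-1}a_1$ and $G_1$ have degree $1$ and the statement $O(f,\mathrm{App}_{d_1}(f))=r_1=m_1$ is immediate since $y+n^{-1}a_1$ is, up to the unit $n$, the sum of the roots and its initial monomial has exponent $m_1$.

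Next, the inductive step: assuming the result for indices $<k$, I would apply Lemma~4.2 with $q=k$ and $g=G_k$. Its hypothesis $\mathrm{In}(G_k(t_1^n,\ldots,t_e^n,y^*(Z,\underline t)))=\alpha Z\,\underline t^{\,r_k}$ is exactly what was established in the previous paragraph, so the conclusion gives $\mathrm{In}((\tau_f G_k)(t_1^n,\ldots,t_e^n,y^*))=\alpha Z\,\underline t^{\,r_k}$. Since $\tau_f$ strictly increases the ``defect'' $n-n/d_k-\deg_y(f-g^{d_k})$ and the sequence of iterated Tschirnhausen transforms stabilizes at $\mathrm{App}_{d_k}(f)$, iterating Lemma~4.2 finitely many times yields $\mathrm{In}(\mathrm{App}_{d_k}(f)(t_1^n,\ldots,t_e^n,y^*))=\alpha Z\,\underline t^{\,r_k}$. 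Specializing $Z\mapsto c_{m_k}$ recovers $y$ itself and keeps the initial monomial nonzero (the coefficient $\alpha c_{m_k}\in\KK^*$), so $\mathrm{In}(\mathrm{App}_{d_k}(f)(t_1^n,\ldots,t_e^n,y(\underline t)))=\alpha c_{m_k}\underline t^{\,r_k}$, i.e.\ $O(f,\mathrm{App}_{d_k}(f))=r_k$.

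The main obstacle I anticipate is the bookkeeping in two places: (i) justifying that Lemma~4.2 may be applied to each successive Tschirnhausen iterate, which requires checking that each iterate $g^{(j)}$ is still a monic degree-$n/d_k$ polynomial whose deformed initial form is $\alpha Z\,\underline t^{\,r_k}$ with the \emph{same} exponent $r_k$ and a \emph{nonzero} constant $\alpha$ (the constant may change from step to step, but Lemma~4.2 as stated propagates exactly this shape, so one only needs that the hypothesis is reproduced by the conclusion); and (ii) pinning down that $c(f,g^{(j)})$ stays equal to $m_k$ throughout so that the relevant $(k,Z)$-deformation remains the right tool. Both are consequences of Lemma~4.2 together with $\tau_f$ not decreasing $y$-degree, but stating them cleanly is the delicate point; everything else is a direct substitution.
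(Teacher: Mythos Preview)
Your approach is essentially the paper's: start from a pseudo $d_k$th root $G_k$, check that $\mathrm{In}(G_k(t_1^n,\ldots,t_e^n,y^*(Z,\underline t)))=\alpha Z\,\underline t^{\,r_k}$, then propagate this shape through iterated Tschirnhausen transforms via Lemma~4.2 to reach $\mathrm{App}_{d_k}(f)$ and finally specialize $Z=c_{m_k}$. The paper compresses all of this into the terse (and slightly abusive) line ``$\mathrm{App}_{d_k}(f)=\tau_f(G_k)$, now use Lemma~4.2,'' leaving the iteration and the verification for $G_k$ implicit; your outer induction on $k$ is not actually needed since each $k$ is handled independently, but it does no harm.
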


\begin{proof}{.} For all $1\leq k\leq h$, let $G_k$ be a pseudo
$d_k$th root of $f$. Then deg$_y(G_k)=\displaystyle{n\over d_k}$.
But App$_{d_k}(f)=\tau_f(G_k)$. Now use Lemma 4.2.
\end{proof}

\section{Generalized Newton polygons}

\medskip

\noindent Let $n\in {\bf N}$ and let
${\underline{r}}_0=(r_0^1,\ldots,r_0^e)$ be the canonical basis of
$(n{\bf Z})^e$. Let $r_1<\ldots <r_h$ be a sequence of elements of
${\bf N}^e$, where $<$ means $<$ coordinate-wise. Set $D_1=n^e$
and for all $1\leq k\leq h$, let $D_{k+1}$ be the GCD of the
$(e,e)$ minors of the $(e,e+k)$ matrix
$(n.I(e,e),(r_1)^T,\ldots,(r_k)^T)$. Suppose that $n^{e-1}$
divides $D_{k}$ for all $1\leq k\leq h+1$ and that
$D_{h+1}=n^{e-1}$, and also that $D_1
> D_1 >\ldots > D_{h+1}$, in such a way that if we set $d_1=n$ and
$\displaystyle{d_k={D_k\over n^{e-1}}}$ for all $2\leq k \leq h$,
then $d_1=n > d_2 >\ldots > d_{h+1}=1$.

\medskip

\noindent  For all $1\leq k\leq h$, let $g_k$ be a monic
polynomial of degree $\displaystyle{n\over d_k}$ in $y$ and set
$G=(g_1,\ldots,g_h)$. Let $F$ be a nonzero polynomial of ${\bf
K}[[\underline{x}]][y]$ and let:

$$
F=\sum_{\underline{\theta}\in
B(G)}c_{\underline{\theta}}(\underline{x})g_1^{\theta_1}.\ldots.g_h^{\theta_h}
$$
\noindent where $B(G)=\lbrace \underline{\theta}=
(\theta_1,\ldots,\theta_h); \forall 1\leq i\leq h-1, 0\leq
\theta_i < e_i=\dfrac{d_i}{d_{i+1}}$ and $\theta_{h} <
+\infty\rbrace$, be the $G$-adic expansion of $F$. Let
Supp$_G(F)=\lbrace \underline{\theta}\in B(G);
c_{\underline{\theta}}\not= 0\rbrace$. If $\theta\in {\rm
Supp}_G(F)$ and $\underline{\gamma}={\rm
exp}(c_{\underline{\theta}}(\underline{x}))$, we shall associate
with the monomial
$c_{\underline{\theta}}(\underline{x})g_1^{\theta_1}.\ldots.g_{h}^{\theta_{h}}$
the $e$-uplet

$$
<((\underline{\gamma},\underline{\theta}),({\underline{r}}_0,\underline{r}))>=\sum_{i=1}^e\gamma_i.r_0^i+\sum_{j=1}^{h}\theta_j.r_j
$$

\noindent There is a unique $\underline{\theta}^0\in {\rm
Supp}_G(F)$ such that if ${\underline{\gamma}}^0={\rm
exp}(c_{{\underline{\theta}}^0}(\underline{x}))$, then:

$$
<(({\underline{\gamma}}^0,{\underline{\theta}}^0),({\underline{r}}_0,\underline{r}))>=
{\rm inf}\lbrace
<((\gamma,\underline{\theta}),(r_0,\underline{r}))>,
\underline{\theta}\in {\rm Supp}_G(F)\rbrace
$$

\noindent  We set

$$
{\rm
fO}(\underline{r},G,F)=<(({\underline{\gamma}}^0,{\underline{\theta}}^0),({\underline{r}}_0,\underline{r}))>
$$

\noindent and we call it the formal order of $F$ with respect to
$(\underline{r},G)$. We also set:

$$
M_G(F)=M(c_{{\underline{\theta}}_0}).g_1^{\theta_1^0}.\ldots.g_h^{\theta_h^0}
$$

\noindent and we call it the initial monomial of $F$ with respect
to $(\underline{r},G)$.

\medskip

 \noindent Let
$f=y^n+a_1(\underline{x})y^{n-1}+\ldots+a_n(\underline{x})$ be a
quasi-ordinary polynomial of ${\bf K}[[x_1,\ldots,x_e]][y]$ and
let $d\in {\bf N}$ be a divisor of $n$. Let $g$ be a monic
polynomial of ${\bf K}[[x_1,\ldots,x_e]][y]$ of degree
$\displaystyle{{n\over d}}$ in $y$ and let:

$$
f=g^{d}+a_1(\underline{x},y)g^{d-1}+\ldots+a_{d}(\underline{x},y)
$$

\noindent be the $g$-adic expansion of $f$. We associate with $f$
the set of points:

$$
\lbrace ({\rm fO}(\underline{r},G,a_k),(d-k){\rm
fO}(\underline{r},G,g)), k=0,\ldots,d\rbrace \subseteq {\bf
N}^{e}\times {\bf N}^e
$$

\noindent We denote this set by GNP$(f,\underline{r},G,g)$ and we
call it the generalized Newton polygon of $f$ with respect to
$(\underline{r},G,g)$. Note that if $e=1$ and $f$ is an
irreducible polynomial of ${\bf K}[[x]][y]$, then the above set is
equivalent to the usual Newton polygon of $f$.

\begin{definition}{\rm We say that $f$ is straight with respect to $(\underline{r},G,g)$ if the following holds:

i) fO$((\underline{r},G,a_d)=d.{\rm fO}((\underline{r},G,g))$.

ii) For all $1\leq k\leq h-1$, fO$(\underline{r},G,a_k)\geq k.{\rm
fO}((\underline{r},G,g))$, where $\geq$ mean $\geq$
coordinate-wise.

\noindent We say that $f$ is strictly straight with respect to
$(\underline{r},G,g)$ if the inequality in ii) is a strict
inequality.}
\end{definition}

 \section{\bf The criterion}

 \medskip

 \noindent Let $f=y^n+a_1(x)y^{n-1}+\ldots+a_n(x)$  be a nonzero element of
 ${\bf K}[[x_1,\ldots,x_e]][y]$ and assume, after an eventual change of
variables, that $a_1(\underline{x})=0$.  Let
${\underline{r}}_0=(r^1_0,\ldots,r^e_0)$ be the canonical basis of
$(n{\bf Z})^e$ and let $d_1=n$. Let $g_1=y$ be the $d_1$-th
approximate root of $f$ and set $m_1=r_1={\rm
exp}(a_n(\underline{x}))$. Let $D_2$ be the gcd of the $(e,e)$
minors of the $(e,e+1)$ matrix $(n.I(e,e),{m_1}^T)$. Let
$d_2=\displaystyle{D_2\over n^{e-1}}$ and let $g_2$ be the
$d_2$-th approximate root of $f$ and set
$e_2=\displaystyle{{d_1\over d_2}={n\over d_2}}$.... Suppose that
we constructed $(r_1,\ldots,r_{k-1})$, $(m_1,\ldots,m_{k-1})$, and
$(d_1,\ldots,d_{k})$, then let $g_k$ be the $d_k$-th approximate
root of $f$ and let

$$
f=g_k^{d_k}+\beta_2^kg_k^{d_k-2}+\ldots+\beta_{d_k}^k
$$

\noindent be the $g_k$-adic expansion of $f$. Then $r_k={\rm
fO}(\underline{r}^k,G^k,\beta_{d_k}^k)$, where
$\underline{r}^k=(\dfrac{r^1_0}{d_k},\ldots,\dfrac{r^e_0}{d_k},
\dfrac{r_1}{d_k},\ldots,\dfrac{r_{k-1}}{d_k})$ and
$G^k=(g_1,\ldots,g_{k-1})$. With these notations we have the
following:

\begin{theorem}{\rm The polynomial $f$ is an irreducible quasi-ordinary polynomial
if and only if the following holds:

i) There is an integer $h$ such that $d_{h+1}=1$.

ii) For all $1\leq k\leq h-1, r_kd_k < r_{k+1}d_{k+1}$, where $<$
means $<$ coordinate-wise.

iii) For all $2\leq k\leq h+1$, $g_k$ is strictly straight with
respect to $(\underline{r}^k,G^k,g_{k-1})$.}
\end{theorem}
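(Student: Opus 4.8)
The plan is to prove both directions by induction on the number of steps in the Abhyankar-style algorithm described just before the theorem, using the results of Sections 3 and 4 to control the approximate roots and the generalized Newton polygons of Section 5 to bootstrap the combinatorial data. First I would settle the forward implication: assume $f$ is irreducible and quasi-ordinary, with characteristic exponents $m_1<\dots<m_h$, gcd-sequence $d_1=n>d_2>\dots>d_{h+1}=1$, and the $r_k$ defined by $r_1=m_1$, $r_{k+1}=e_kr_k+m_{k+1}-m_k$. By Theorem 4.3 we have $O(f,\mathrm{App}_{d_k}(f))=r_k$, so the quantity produced by the algorithm at step $k$ as $\mathrm{fO}(\underline r^k,G^k,\beta_{d_k}^k)$ must indeed coincide with $r_k$: here I would expand $f$ in its $g_k$-adic form, apply the root $y(\underline t)$ and use Proposition 2.2 together with the fact that $c(f,g_k)=m_k$ to identify the dominant term. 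Condition (i) is then the statement that the characteristic sequence terminates at $1$, which is part of the definition of the gcd-sequence of a quasi-ordinary polynomial; condition (ii), $r_kd_k<r_{k+1}d_{k+1}$ coordinate-wise, follows from $r_{k+1}d_{k+1}-r_kd_k=(m_{k+1}-m_k)d_{k+1}+r_kd_k(1-1/e_k)\cdot\text{(stuff)}$ — more cleanly, from $r_{k+1}d_{k+1}=r_kd_k+(m_{k+1}-m_k)d_{k+1}$ together with $m_{k+1}>m_k$ and $d_{k+1}\geq 1$. Condition (iii), strict straightness of $g_k$ with respect to $(\underline r^k,G^k,g_{k-1})$, is exactly the computation performed in the proof of Lemma 4.2: there it is shown that in the $g$-adic expansion of $f$ the term $a_1$ (which for $g=g_{k-1}$ a $d_{k-1}$th approximate root is already $0$) and more generally the intermediate coefficients have formal order strictly exceeding $k\cdot\mathrm{fO}(\underline r^k,G^k,g_{k-1})$, while the last coefficient realizes equality because $\mathrm{In}(f(\underline t^n,y^*))=\alpha(Z^{e_q}-y_{m_q}^{e_q})^{d_{q+1}}\underline t^{r_qd_q}$ — the $(Z^{e_q}-y_{m_q}^{e_q})^{d_{q+1}}$ factor being a genuine $e_q$th power forces the Newton polygon to have a single side of the required slope.

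For the converse I would argue by induction on $h$: assume (i), (ii), (iii) hold and construct a root. The base case $h=0$ (so $d_1=1$, i.e. $n=1$) is trivial. For the inductive step, consider $g_h=\mathrm{App}_{d_h}(f)$; by construction $g_h$ has degree $n/d_h$ and, one checks, satisfies the analogous hypotheses, so by induction $g_h$ is irreducible and quasi-ordinary with a root $\bar y(\underline t)$ whose support realizes the truncated characteristic data $m_1,\dots,m_{h-1}$. Then I would use the $g_h$-adic expansion $f=g_h^{d_h}+\beta_2^hg_h^{d_h-2}+\dots+\beta_{d_h}^h$ and the strict straightness hypothesis (iii) for $g_h$ with respect to $(\underline r^h,G^h,g_{h-1})$ — applied one level up, i.e. the hypothesis that $g_{h+1}=f$ is strictly straight with respect to $(\underline r^{h+1},G^{h+1},g_h)$: this says the generalized Newton polygon $\mathrm{GNP}(f,\underline r,G,g_h)$ is a single segment from $(0,d_h\cdot\mathrm{fO})$ to $(r_h\cdot(\text{something}),0)$. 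The key point is that a generalized Newton polygon with a single side whose associated "edge polynomial" in the deformation variable $Z$ is $(Z^{e_h}-c^{e_h})^{d_{h+1}}$ — which strict straightness plus condition (ii) guarantees, because (ii) forces the lattice index $e_h=d_h/d_{h+1}$ and the edge polynomial must be this pure power for the formal orders to line up — allows one to solve for a root of $f$ by a Hensel/Newton-polygon argument: set $g_h(\underline t^n,y)$ to have order $r_h$ on the new root, add a term $c_{m_h}\underline t^{m_h}$, and iterate. One then verifies that the support of the root so produced is contained in $(n\mathbb Z)^e+\sum_{|m_i|\leq|p|}m_i\mathbb Z$ and that $m_1,\dots,m_h$ are genuinely the characteristic exponents, i.e. conditions (i)–(iii) of the definition in Section 2 hold; irreducibility then follows because the $n$ conjugates under $(U_n)^e$ are distinct, which is forced by $m_h\notin(n\mathbb Z)^e+\sum_{j<h}m_j\mathbb Z$, itself a consequence of $d_{h+1}=1<d_h$ in (i) and the lattice-index reading of (ii).

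The main obstacle, I expect, is the converse direction's edge-polynomial analysis: one must show that strict straightness of $g_{k}$ with respect to $(\underline r^{k},G^{k},g_{k-1})$ for \emph{all} $k$, rather than giving a priori a factorization of the Newton-polygon edge polynomial, nonetheless forces that edge polynomial to be a perfect $e_k$th power $(Z^{e_k}-c^{e_k})^{d_{k+1}}$, so that the single-side Newton polygon does not split $f$ into factors of smaller degree. This is where condition (ii) must be used in an essential way — it pins down the integers $d_k$ and hence the multiplicity $d_{k+1}$ and the exponent $e_k$ — and where one has to be careful that the recursively constructed $g_h$ really does satisfy the hypotheses of the theorem at level $h-1$ (i.e. that the data $(\underline r^k,G^k,d_k)$ for $f$ restrict correctly to the data for $g_h$). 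A secondary technical point is keeping track of the change of variables normalizing $a_1(\underline x)=0$ and checking it is harmless, and verifying that the formal orders $\mathrm{fO}$ computed purely combinatorially from the $G$-adic expansion agree with the honest orders $O(f,-)$ once a root exists — this is essentially Lemma 4.1 and the uniqueness-of-dominant-term statement at the end of Section 3, which I would invoke rather than reprove.
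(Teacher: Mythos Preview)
Your forward direction is close to the paper's, but your argument for (iii) is imprecise. You point to Lemma 4.2 as establishing that ``the intermediate coefficients have formal order strictly exceeding $k\cdot\mathrm{fO}(\ldots)$'', but Lemma 4.2 only controls $a_1$, not the full range $a_2,\ldots,a_{d_q-1}$. The paper instead uses a clean semigroup argument: in the $g_h$-adic expansion $f=g_h^{d_h}+\beta_2^hg_h^{d_h-2}+\ldots+\beta_{d_h}^h$, each $O(f,\beta_i^h)$ lies in the subsemigroup $\Gamma^h$ generated by $r_0^1,\ldots,r_0^e,r_1,\ldots,r_{h-1}$, whereas $i\cdot r_h\notin\Gamma^h$ for $0<i<d_h$; hence the orders $O(f,\beta_i^h)+(d_h-i)r_h$ are pairwise distinct and, since they must all be $\geq r_hd_h=O(f,g_h^{d_h})$, strict straightness follows. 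Your derivation of (ii) from $r_{k+1}d_{k+1}=r_kd_k+(m_{k+1}-m_k)d_{k+1}$ matches the paper exactly.

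For the converse you take a genuinely different route, and the obstacle you flag --- forcing the edge polynomial to be a pure power $(Z^{e_h}-c^{e_h})^{d_{h+1}}$ so that the Newton iteration does not split $f$ --- is real and not easy to resolve directly from (i)--(iii). The paper sidesteps this entirely via Proposition 6.3: rather than constructing a full root of $f$, it builds the auxiliary polynomial $F=g_h^{d_h}+M_{G^h}(\beta_{d_h}^h)$, uses a $(m_h/d_h,Z)$-deformation of a root of the (inductively irreducible) $g_h$ to find a single $z_0\in\KK$ for which the resulting $y(\underline t)$ satisfies $\exp(F(t_1^n,\ldots,t_e^n,y(\underline t)))>r_hd_h$, and then observes that $F$, being monic of degree $n$, must coincide with the minimal polynomial of $y(\underline x^{1/n})$ over $\KK((\underline x))$ and is therefore irreducible. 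Since $O(F,f)=O(F,F-f)>r_hd_h$, Proposition 6.3 then gives irreducibility of $f$ at once. This two-step comparison --- prove an auxiliary ``monomial-tail'' polynomial irreducible, then transfer via Proposition 6.3 --- is the key idea you are missing; it removes any need to analyze the full edge polynomial or to iterate a Newton process for $f$ itself. Your plan could in principle be completed, but it is substantially more work than necessary.
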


\noindent We shall first prove the following results:

\begin{lemma}{\rm  Let $c\in {\bf K}^*$. The quasi-ordinary polynomial
$F=y^n-cx_1^{\alpha_1}.\ldots.x_e^{\alpha_e}$ is irreducible in
${\bf K}[[x_1,\ldots,x_e]][y]$ if and only if
gcd$(n,\alpha_1,\ldots,\alpha_e)=1$, or equivalently if and only
if the gcd of the $(e,e)$ minors of the matrix
$(nI(e,e),(\alpha_1,\ldots,\alpha_e)^T)$ is $n^{e-1}$.}
\end{lemma}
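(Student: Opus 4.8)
The plan is to establish the two equivalences separately, the combinatorial one first and then the one with irreducibility of $F$. For the combinatorial equivalence I would simply list the $(e,e)$ minors of the $e\times(e+1)$ matrix $(nI(e,e),(\alpha_1,\ldots,\alpha_e)^T)$: deleting the last column gives $\det(nI(e,e))=n^e$, while deleting the $j$-th column for $1\le j\le e$ gives $\pm n^{e-1}\alpha_j$ (expand along the row whose only surviving nonzero entry is $\alpha_j$). Hence the gcd of all these minors is $\gcd(n^e,n^{e-1}\alpha_1,\ldots,n^{e-1}\alpha_e)=n^{e-1}\cdot\gcd(n,\alpha_1,\ldots,\alpha_e)$, which equals $n^{e-1}$ exactly when $\gcd(n,\alpha_1,\ldots,\alpha_e)=1$.

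For the implication ``$F$ irreducible $\Rightarrow\gcd(n,\alpha_1,\ldots,\alpha_e)=1$'' I would argue by contraposition. If $d:=\gcd(n,\alpha_1,\ldots,\alpha_e)>1$, write $n=dn'$ and $\alpha_i=d\alpha_i'$, and pick $\gamma\in{\bf K}^*$ with $\gamma^d=c$, which is possible since ${\bf K}$ is algebraically closed. Then $F=(y^{n'})^d-(\gamma x_1^{\alpha_1'}\cdots x_e^{\alpha_e'})^d=\prod_{\zeta\in U_d}\bigl(y^{n'}-\zeta\gamma x_1^{\alpha_1'}\cdots x_e^{\alpha_e'}\bigr)$ is a product of $d\ge 2$ monic polynomials of positive $y$-degree in ${\bf K}[[x_1,\ldots,x_e]][y]$, so $F$ is reducible.

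For the converse, assuming $\gcd(n,\alpha_1,\ldots,\alpha_e)=1$, I would fix $\gamma$ with $\gamma^n=c$ and observe that $F(t_1^n,\ldots,t_e^n,y)=y^n-c\,t_1^{n\alpha_1}\cdots t_e^{n\alpha_e}$ has the $n$ pairwise distinct roots $\gamma'\,t_1^{\alpha_1}\cdots t_e^{\alpha_e}$ with $(\gamma')^n=c$, one of which is $y(\underline t)=\gamma\,t_1^{\alpha_1}\cdots t_e^{\alpha_e}$. The conjugate of $y(\underline t)$ under $(w_1,\ldots,w_e)\in(U_n)^e$ is $\gamma\,(w_1^{\alpha_1}\cdots w_e^{\alpha_e})\,t_1^{\alpha_1}\cdots t_e^{\alpha_e}$, so the orbit of $y(\underline t)$ has cardinality equal to the order of the image of the homomorphism $(U_n)^e\to U_n$ sending $(w_1,\ldots,w_e)$ to $w_1^{\alpha_1}\cdots w_e^{\alpha_e}$. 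Identifying $U_n$ with ${\bf Z}/n{\bf Z}$, that image is the subgroup generated by the classes of $\alpha_1,\ldots,\alpha_e$, namely $\gcd(n,\alpha_1,\ldots,\alpha_e)\cdot{\bf Z}/n{\bf Z}={\bf Z}/n{\bf Z}$; hence the orbit is all of ${\rm Root}(F)$, i.e. $(U_n)^e$ acts transitively on it. Then, if $F=F_1F_2$ were a nontrivial factorization with $F_1,F_2\in{\bf K}[[x_1,\ldots,x_e]][y]$ monic of positive $y$-degree, the root set of $F_1(t_1^n,\ldots,t_e^n,y)$ would be a proper nonempty subset of ${\rm Root}(F)$, stable under $(U_n)^e$ since the coefficients of $F_1$ lie in ${\bf K}[[x_1,\ldots,x_e]]={\bf K}[[t_1^n,\ldots,t_e^n]]$ and are thus fixed by the action; this contradicts transitivity, so $F$ is irreducible.

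The only step that needs a little care is this last one, the passage between a nontrivial factorization of $F$ over ${\bf K}[[\underline x]]$ and the existence of a proper $(U_n)^e$-stable subset of ${\rm Root}(F)$; the remaining ingredients — the minor bookkeeping, the explicit factorization when $d>1$, and the computation of the image of $(w_i)\mapsto\prod w_i^{\alpha_i}$ — are elementary, and the real task is just to organize them.
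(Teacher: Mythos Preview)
Your proof is correct. The paper's own argument is much terser and covers only the ``if'' direction: it fixes an $n$-th root $\tilde c$ of $c$, sets $Y=\tilde c\,x_1^{\alpha_1/n}\cdots x_e^{\alpha_e/n}$, and simply asserts that $F$ is the minimal polynomial of $Y$ over ${\bf K}((\underline x))$, hence irreducible. Your transitivity argument for the $(U_n)^e$-action on ${\rm Root}(F)$ is exactly what justifies that assertion --- it is the computation showing the minimal polynomial has degree $n$ --- so the two approaches coincide in spirit. Where you go further is in supplying what the paper omits: the explicit factorization when $d=\gcd(n,\alpha_1,\ldots,\alpha_e)>1$ (the ``only if'' direction) and the elementary minor computation establishing the stated combinatorial equivalence, both of which the paper leaves to the reader.
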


\begin{proof}{.} Let $\tilde{c}$ be an n-th root of $c$ in ${\bf K}$ and let
$Y=\tilde{c}x_1^{\alpha_1\over n}.\ldots.x_e^{\alpha_e\over n}\in
{\bf K}((x_1^{1\over n},\ldots,x_e^{1\over n}))$. Then $F$ is the
minimal polynomial of $Y$ over ${\bf K}((x_1,\ldots,x_e))$. In
particular it is irreducible.
\end{proof}

\begin{proposition}{\rm Assume that the polynomial $f$ is
irreducible and let $(m_k)_{1\leq k\leq h}$ be the set of
characteristic exponents of $f$. Let $F$ be a quasi-ordinary
polynomial of  ${\bf K}[[x_1,\ldots,x_e]][y]$ and assume that $F$
is monic of degree $n$ in $y$. If $O(f,F) > r_hd_h$, then $F$ is
irreducible in ${\bf K}[[x_1,\ldots,x_e]][y]$. }
\end{proposition}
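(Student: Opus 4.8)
The plan is to argue by contradiction. Suppose $F$ is reducible and write $F=F_1\cdots F_s$ with $s\geq 2$, each $F_i$ monic of degree $n_i\geq 1$ in $y$; then $n_1+\cdots+n_s=n$, so $n_i<n$ for every $i$. First I would observe that each $F_i$ is quasi-ordinary: the discriminant of $F$ is, up to a unit, $\prod_i {\rm disc}(F_i)\cdot\prod_{i<j}{\rm Res}_y(F_i,F_j)^2$, so each ${\rm disc}(F_i)$ is a monomial times a unit. I may then take each $F_i$ irreducible. Fixing a root $y(\underline{t})\in{\rm Root}(f)$ and using $F(t_1^n,\ldots,t_e^n,y(\underline{t}))=\prod_i F_i(t_1^n,\ldots,t_e^n,y(\underline{t}))$ together with the additivity of ${\rm exp}$ on products (the initial form of a product is the product of the initial forms in the domain $\KK[[\underline{t}]]$), I obtain $O(f,F)=\sum_i O(f,F_i)$.

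The core is the inequality
$$
(\star)\qquad O(f,g)\leq\frac{m}{n}\,r_hd_h\qquad\text{(coordinate-wise)}
$$
for every monic irreducible quasi-ordinary $g$ of degree $m<n$ in $y$ (with $f\cdot g$ quasi-ordinary). Granting $(\star)$ for each $F_i$ (so $m=n_i$), summing gives $O(f,F)=\sum_i O(f,F_i)\leq\sum_i\frac{n_i}{n}r_hd_h=r_hd_h$, contradicting $O(f,F)>r_hd_h$, and $F$ must be irreducible.

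To establish $(\star)$ I would apply Proposition 2.2 to $(f,g)$ with contact $c=c(f,g)$ and split into its cases. In case (i), $nc\notin M_q$ for all $q$, so $g$ does not reach the first characteristic behaviour of $f$, whence $nc<m_1$ and $O(f,g)=nmc=m(nc)<m\,m_1=\frac{m}{n}r_1d_1\leq\frac{m}{n}r_hd_h$. In case (ii), with $q$ least such that $nc\in M_q$, one has $O(f,g)=\bigl(r_qd_q+(nc-m_q)d_{q+1}\bigr)\frac{m}{n}$; if $nc=m_q$ this is $r_qd_q\cdot\frac{m}{n}\leq r_hd_h\cdot\frac{m}{n}$, using $r_1d_1<\cdots<r_hd_h$, which comes from $r_{k+1}d_{k+1}=r_kd_k+(m_{k+1}-m_k)d_{k+1}$ (a direct consequence of $r_{k+1}=e_kr_k+m_{k+1}-m_k$ and $d_k=e_kd_{k+1}$). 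If $nc\neq m_q$, Proposition 2.2(iii) gives $\frac{n}{d_{q+1}}\mid m$; since $m<n$ this forces $d_{q+1}>1$, hence $q\leq h-1$, and then — the step I expect to be the main obstacle — one must show $nc\leq m_{q+1}$ coordinate-wise. I would derive this from the lattice structure of quasi-ordinary roots: a root of $g$, suitably rescaled, has support whose exponents are divisible by $n/m$, so it cannot match the $m_{q+1}$-term of $y(\underline{t})$, because $m_{q+1}\notin(n\ZZ)^e+\sum_{j\leq q}m_j\ZZ$; hence the contact cannot pass $m_{q+1}$ (matching past it would, by Proposition 2.2(iii) applied at level $q+1$ and iterated, eventually force $n\mid m$, impossible). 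Granting $nc\leq m_{q+1}$,
$$
O(f,g)\cdot\frac{n}{m}=r_qd_q+(nc-m_q)d_{q+1}\leq r_qd_q+(m_{q+1}-m_q)d_{q+1}=r_{q+1}d_{q+1}\leq r_hd_h,
$$
which is $(\star)$.

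The hardest parts will be the coordinate-wise comparisons ($nc<m_1$ in case (i), $m_q\leq nc\leq m_{q+1}$ in case (ii)), which must be extracted carefully from Abhyankar--Jung, Lemma 2.1, and the monomiality of the relevant initial forms valid once $f\cdot g$ is quasi-ordinary. There is also the preliminary point that $f\cdot F_i$ must be quasi-ordinary for Proposition 2.2 to apply to each factor; I expect this to follow from the hypothesis $O(f,F)>r_hd_h$, which forces $F(t_1^n,\ldots,t_e^n,y(\underline{t}))$, hence each $F_i(t_1^n,\ldots,t_e^n,y(\underline{t}))$, to have a monomial initial form, so that each ${\rm Res}_y(f,F_i)$ is a monomial times a unit.
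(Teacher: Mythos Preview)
Your approach is essentially the same as the paper's: argue by contradiction, factor $F$ into irreducible components, bound $O(f,\tilde F)\le \dfrac{\deg_y\tilde F}{n}\,r_hd_h$ for each factor via Proposition~2.2, and sum. The paper compresses your case analysis into a single sentence---it only records the extreme case (contact beyond $m_h$ with $nc\neq m_h$ forces, by Proposition~2.2(iii) with $q=h$ and $d_{h+1}=1$, that $n\mid\deg_y\tilde F$, impossible)---and then states the bound $(\star)$ without further justification; your detailed walk through cases (i) and (ii), the chain $r_1d_1<\cdots<r_hd_h$, and the concerns about $f\cdot F_i$ being quasi-ordinary are exactly the points the paper suppresses.
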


\begin{proof}{.} Assume that $F$ is not irreducible and let  $\tilde{F}$ be an irreducible component of
$F$ in ${\bf K}[[x_1,\ldots,x_e]][y]$. Let $C=c(f,\tilde{F})$ be
the contact of $f$ with $\tilde{F}$. If $C\in M_{h+1}$ and
$C\not=m_h$, then deg$_y(\tilde{F}) \geq n$, which is a
contradiction because $F$ is not irreducible. In particular,
$O(f,\tilde{F})\leq r_hd_h.\displaystyle{{{\rm
deg}_y(\tilde{F})\over n}}$. Since this is true for all
irreducible component of $F$, then $O(f,F)\leq
r_hd_h.\displaystyle{{{\rm deg}_y(F)\over n}}=r_hd_h$, which is a
contradiction.
\end{proof}

\begin{proofs}{.} Suppose first that $f$ is irreducible. Then the  condition i) is
obvious. On the other hand, if we denote by $(m_k)_{1\leq k\leq
h}$ the set of characteristic exponents of $f$, then

$$
r_{k+1}d_{k+1}=r_kd_k+(m_{k+1}-m_k).d_{k+1}
$$

\noindent for all $1\leq k\leq h-1$. This proves ii). Now for all
$1\leq k\leq h+1$, $g_k$ is an irreducible quasi-ordinary
polynomial  and $g_1,\ldots,g_{k-1}$ are the approximate roots of
$g_k$. In particular, to prove iii), it suffices to prove that
$f=g_{h+1}$ is straight with respect to
$(\underline{r},G,g_h)=(\underline{r}^{h+1},G^{h+1},g_h)$. Let

$$
f=g_h^{d_h}+\beta_2^hg_h^{d_h-2}+\ldots+\beta_{d_h}^h
$$

\noindent be the $g_h$-adic expansion of $f$. If we denote by
$\Gamma^h$ the semigroup generated by
$r^0_1,\ldots,r^0_e,r_1,\ldots,r_{h-1}$, then we have the
following:

- For all  $2\leq i\leq h-1$, $O(\beta_i^h,f)\in \Gamma^h$.

- For all $0 < a < d_h, a.r_h\notin \Gamma^h$.

\noindent It follows that for all  $2\leq i\leq h-1,
O(\beta_i^h,f)\not= i.r_h$  and for all  $2\leq i\not= j\leq
d_h-1$, $O(\beta_i^h,f)+(d_h-i)r_h\not=O(\beta_j^h,f)+(d_h-j)r_h$.
Since $O(g_h^{d_h},f)=r_hd_h$, then $O(\beta_{d_h}^h,f)=r_hd_h$
and $O(\beta_i^h,f) > i.r_h$ for all $2\leq i\leq d_h-1$. This
implies iii).

\medskip

\noindent Conversely suppose that $f$ verifies the conditions i),
ii), and iii).  We shall prove by induction on $h$ that $f$ is
irreducible. Suppose first that $h=1$, then
$f=y^n+a_2(\underline{x})y^{n-2}+\ldots+a_n(\underline{x})$ and
$O_x(a_i(\underline{x}))
> i. O_x(a_n(\underline{x}))$ for all $2\leq i\leq n-1$. Furthermore, $D_2=n^{e-1}$. In particular
$F=y^n+M(a_1(\underline{x}))$ is irreducible by Lemma 6.2. But
$O(F,f)=O(f-F,f)> r_1d_1$, then $f$ is irreducible by Proposition
6.3.

\noindent Let $h >1$ and assume that $g_k$ is an irreducible
quasi-ordinary polynomial for all $1\leq k\leq h$. Let

$$
f=g_h^{d_h}+\beta_2^hg_h^{d_h-2}+\ldots+\beta_{d_h}^h
$$

\noindent be the $g_h$-adic expansion of $f$ and let
$F=g_h^{d_h}+M_{G^{h}}(\beta^h_{d_h})$. We shall prove that $F$ is
irreducible. Let to this end $Y(\underline{t})=\sum_p
Y(p){\underline{t}}^p$ be a root of $g_h(t_1^{n\over
d_h},\ldots,t_e^{n\over d_h},y)=0$ and consider the
$\displaystyle{({m_h\over d_h},Z)}$ deformation
$\tilde{Y}=\sum_{p\in {\displaystyle {1\over d_h}.M_h}}
Y(p){\underline{t}}^p+Zt^{m_h\over d_h}$ of $Y(\underline{t})$.
Let
$M_{G^{h}}(\beta_{d_h}^h)=c.{\underline{x}}^{\underline{\theta}_0}.g_1^{\theta_1}.\ldots.g_{h-1}^{\theta_{h-1}}$,
where $c\in {\bf K}^*$. Since $g_h$ is irreducible, then

$$
O(F,g_h)=O(M_{G^{h}}(\beta_{d_h}^h),g_h)=\sum_{i=1}^e\theta_0^i{r_i^0\over
d_h}+\sum_{k=1}^{h-1}\theta_k{r_k\over d_h}
$$

\noindent  but $g_h(t_1^{n\over d_h},\ldots,t_e^{n\over
d_h},\tilde{Y})=c(Z)t^{r_h\over d_h}$, deg$_Zc(Z) > 0$, and
inco$(g_k(t_1^{n\over d_h},\ldots,t_e^{n\over
d_h},\tilde{Y}))\in{\bf K}^*$ for all $1\leq k\leq h-1$, in
particular
info$(F(t_1^n,\ldots,t_e^{n},\tilde{Y}(t_1^{d_h},\ldots,t_e^{d_h},Z))=\tilde{c}(Z)t^{r_h}$
and deg$_Z(\tilde{c}(Z)) > 0$. This implies that there exists
$z_0\in {\bf K}$ such that if
$y(\underline{t})=\tilde{Y}(t_1^{d_h},\ldots,t_e^{d_h},z_0)$, then
exp$(F(t_1^n,\ldots,t_e^n,y(\underline{t})))
> r_hd_h$. Since $F$ is monic in $y$ and the minimal polynomial of
$y(x_1^{1\over n},\ldots,x_e^{1\over n})$ over ${\bf
K}((x_1,\ldots,x_e))$ is  of degree $n$, then this polynomial
coincides with  $F$, which is consequently irreducible. Now
$O(F,f)=O(F-f,f)> r_hd_h$, then $f$ is irreducible by Proposition
6.3.
\end{proofs}

\section{Examples}

\noindent {\bf Example 1}: Let
$f=y^8-2x_1x_2y^4+x_1^2x_2^2-x_1^3x_2^2\in{\bf K}[[x_1,x_2]][y]$.
Then we have:

- $D_1=n^2=8^2=64, d_1=n=8$, $r_0^1=(8,0),r_0^2=(0,8)$,
$g_1=$App$_{d_1}(f)=y$, and $r_1=O(f,g_1)=(2,2)$.

- $D_2$ is the gcd of the $(2,2)$ minors of the matrix
$(8.I(2,2),(2,2)^T)$, then $D_2=16=8.2$, in particular $d_2=2$.
Since $f=(y^4-x_1x_2)^2-x_1^3x_2 ^2$, then
$g_2$=App$_{d_2}(f)=y^4-x_1x_2$. Let ${\underline{
r}}^2=(\dfrac{r_0^1}{d_2},\dfrac{r_0^2}{d_2},\dfrac{r_1}{d_2})=((4,0),(0,4),(1,1))$
and ${\underline{G}}^2=(g_1)$, then
$r_2=$fO$({\underline{r}}^2,{\underline{G}}^2,x_1^3x_2^2)=3(4,0)+2(0,4)=(12,8)$.

- $D_3$ is the gcd of the $(2,2)$ minors of the matrix
$(8.I(2,2),(2,2)^T,(12,8)^T)$, then $D_3=8$, in particular
$d_3=1$.

- Now GNP$(g_2,{\underline{ r}}^2,{\underline{G}}^2)=\lbrace
((0,0),4.(1,1)),((4,4),(0,0))\rbrace$ and GNP$(f,{\underline{
r}}^3=(r_0^1,r_0^2,r_1,r_2),{\underline{G}}^3=(g_1,g_2))=\lbrace
((0,0),2.(12,8)),((24,16),(0,0))\rbrace$, then the strict
straightness condition is verified. Since $r_1d_1 < r_2d_2$, then
$f$ is irreducible. Note that $m_2=(10,6)$ is the second
characteristic exponent of $f$.

\medskip

\noindent {\bf Example 2}: Let
$f=y^8-2x_1x_2y^4+x_1^2x_2^2-x_1^4x_2^2-x_1^5x_2^3\in{\bf
K}[[x_1,x_2]][y]$. Then we have:

- $D_1=n^2=8^2=64, d_1=n=8$, $r_0^1=(8,0),r_0^2=(0,8)$,
$g_1=$App$_{d_1}(f)=y$, and $r_1=O(f,g_1)=(2,2)$.

- $D_2$ is the gcd of the $(2,2)$ minors of the matrix
$(8.I(2,2),(2,2)^T)$, then $D_2=16=8.2$, in particular $d_2=2$.
Since $f=(y^4-x_1x_2)^2-x_1^4x_2 ^2-x_1^5x_2^3$, then
$g_2$=App$_{d_2}(f)=y^4-x_1x_2$. Let ${\underline{
r}}^2=(\dfrac{r_0^1}{d_2},\dfrac{r_0^2}{d_2},\dfrac{r_1}{d_2})=((4,0),(0,4),(1,1))$
and ${\underline{G}}^2=(g_1)$, then
$r_2=$fO$({\underline{r}}^2,{\underline{G}}^2,x_1^4x_2^2)=4(4,0)+2(0,4)=(16,8)$.

- $D_3$ is the gcd of the $(2,2)$ minors of the matrix
$(8.I(2,2),(2,2)^T,(16,8)^T)$, then $D_3=16$, in particular
$d_3=d_2=2$. In particular $f$ is not irreducible. Note that in
this example the strict straightness condition is verified for $f$
and $g_2$.

\medskip

\noindent {\bf Example 3}: Let
$f=y^8-2x_1x_2y^4+x_1^3x_2^2-x_1y^5\in{\bf K}[[x_1,x_2]][y]$. Then
we have:

- $D_1=n^2=8^2=64, d_1=n=8$, $r_0^1=(8,0),r_0^2=(0,8)$,
$g_1=$App$_{d_1}(f)=y$, and $r_1=O(f,g_1)=(3,2)$.

- $D_2$ is the gcd of the $(2,2)$ minors of the matrix
$(8.I(2,2),(3,2)^T)$, then $D_2=8$, in particular $d_2=1$.

-  GNP$(f,{\underline{
r}}^2=(r_0^1,r_0^2,r_1),{\underline{G}}^2=(g_1))=\lbrace
((0,0),8.(3,2)),((8,0),5.(3,2)),((8,0)+(0,8),4.(3,2)),(3.(8,0)+2.(0,8),(0,0))\rbrace=
\lbrace
((0,0),(24,16)),((8,0),(15,10)),((8,8),(12,8)),((24,16),(0,0))\rbrace$.
Here the strict straightness is not verified, then $f$ is not
irreducible.

\begin {thebibliography}{8}

\bibitem {1} {S.S. Abhyankar.- Expansion Techniques in Algebraic Geometry, Lecture Notes of the
Tata Institute Bombay, 57, (1977)}.

\bibitem {2} {S.S. Abhyankar.- On the ramification of algebraic
    functions, Amer. J. Math. 77 (1955), 575-592.}

\bibitem {3} {S.S. Abhyankar.-Irreducibility criterion for germs of analytic functions of two
complex variables. Adv. Math., 74 n$^o$2(1989), 190-257.}

\bibitem {4} {P.D. Gonzalez Perez.- The semigroup of a quasi-ordinary
    hypersurface, Journal of the Institute of Mathematics of Jussieu,
    n$^o$ 2 (2003), 383-399.
}

\bibitem {5} {K. Kiyek and M. Micus.- Semigroup of a quasiordinary
    singularity, Banach Center Publications, Topics in Algebra,
    Vol. 26 (1990), 149-156.}

\bibitem {6}{Lejeune-Jalabert Monique.- Sur l'\'equivalence des singularit\'es des courbes algebroides planes. Coefficients
de Newton, Publications de l'Ecole Polytechnique (1969). }

\bibitem {7} {J.Lipman.- Quasi-ordinary singularities of embedded surfaces, Thesis, Harvard University (1965). }

\bibitem {8} {P. Popescu-Pampu.-  Arbres de contact des singularit\'es
    quasi-ordinaires et graphes d'adjacence pour les 3-vari\'et\'es
    r\'eelles, Th\`ese de doctorat de l'universite de Paris 7 (2001).

}

\end {thebibliography}

\end {document}